\numberwithin{equation}{section} 
\newtheorem{theorem}{\bf Theorem}[section]
\newtheorem{remark}{\bf Remark}[section]
\newtheorem{lemma}{\bf Lemma}[section]
\newcommand{\bu}{{\bf {u}}}
\newcommand{\bphi}{\mbox{\boldmath $\phi$}}
\newcommand{\bv}{{\bf {v}}}
\newcommand{\bF}{{\bf {F}}}
\newcommand{\bw}{{\bf {w}}}
\newcommand{\bz}{{\bf {z}}}
\newcommand{\f}[1]{\bf #1}
\newcommand{\norm}[1]{\lVert #1\rVert}
\newsavebox{\savepar}
\begin{document}

\title{Stabilization of Kelvin-Voigt viscoelastic fluid flow model}
\author{Sudeep Kundu,\; Amiya K. Pani\\
	Department of Mathematics\\
	IIT Bombay, Powai, Mumbai-400076 (India)\\
	Email: sudeep.kundu85@gmail.com, akp@math.iitb.ac.in .}
\maketitle
\abstract{In this article, stabilization result for the viscoelastic fluid flow problem governed by Kelvin-Voigt model, that is, convergence of the unsteady solution to a steady state solution is proved under the assumption that linearized self-adjoint steady state eigenvalue problem has a minimal positive eigenvalue. Both power and exponential convergence results are derived under various conditions 
on the forcing function. It is shown that results are valid uniformly in the time relaxation or some times called regularization parameter $\kappa$ as $\kappa\to 0$, which in turn, establishes results for the Navier-Stokes system.
}

{\bf Keywords:} Viscoelastic fluid, Kelvin-Voigt model, exponential decay, power and exponential convergence, 
steady state, stabilization.
\footnote
{{\bf AMS subject classification:} 35B35, 76D05, 93D20.}
\section{Introduction}
Consider the following equation arising in Kelvin-Voigt model of viscoelastic fluid flow problem: 
find $(\bu,p)$ such that
\begin{align}
  {\bu}_t-\kappa\Delta {\bu}_t-\nu\Delta {\bu}+{\bu}\cdot\nabla {\bu}+\nabla p&={\bf f}(x,t), \quad x\in \Omega, \quad t>0,\label{eq1.1}\\
 \nabla\cdot{\bu}&=0,\quad x\in \Omega, \quad t>0,\label{eq1.2}\\
 {\bu}(x,0)=u_0 \quad x\in \Omega, \quad {\bu}&=0 \quad \text{on} \quad\partial \Omega, \quad t\geq 0\label{eq1.3},
\end{align}
 where, $\nu>0$ is the coefficient of kinematic viscosity, $\kappa>0$ is the retardation time or the time of relaxation of deformations and $\Omega$ is a bounded convex polygonal domain in $\mathbb R^2$ with boundary $\partial \Omega$. Regarding viscoelastic fluid flow problems, Pavlovskii \cite{pavlovskii} first introduced this model as a model of weakly concentrated water-polymer mixture. Then, Oskolkov \cite{oskolkov1} and his collaborators called it as Kelvin-Voigt model. For applications of such models see \cite{burt1}, \cite{burt2} and \cite{cotter} and reference therein. Recently, Cao {\it {et al.}}\cite{clt} have proposed this model as an inviscid regularization with regularizing parameter $\kappa$ of the Navier-Stokes system.
 Since the system differs from the system of Navier-Stokes equations by $-\kappa\Delta\bu_t$, then one is curious to explore how far results on stabilization for the Navier-Stokes system carry over to the Kelvin-Voigt model \eqref{eq1.1}-\eqref{eq1.3}. Therefore, in this article, both  power and exponential convergence results of the unsteady solution to a steady state solution are proved under various assumptions on the forcing function ${\bf {f}}(x,t)$.\\
  For local and global solvability of the problem \eqref{eq1.1}-\eqref{eq1.3}, refer to \cite{oskolkov2}, \cite{oskolkov3}, \cite{oskolkov4} and \cite{oskolkov5}. Earlier, exponential decay property has been proved for the problem \eqref{eq1.1}-\eqref{eq1.3}
  when ${\bf {f}}=0$ in Bajpai {\it {et al.}}\cite{bnpdy}, but the constants appeared in their estimates depend on $(\frac{1}{\kappa^r}),\hspace{0.1cm} r\geq 1$. Subsequently, Pany {\it {et al.}}
  \cite{pbp} have modified the arguments of \cite{bnpdy} to establish exponential decay properties, which are valid uniformly in $\kappa$ as $\kappa\to 0$, that is, these results hold for the Navier-Stokes system. On related results on long term dynamics of the problem \eqref{eq1.1}-\eqref{eq1.3}, refer to Kalantarov and Titi \cite{ktv1}, Kalantarov \cite{ktv2} and Kalantarov {{\it et al.}} \cite{ktv3}.\\
 On stabilizability, Sobolevskii \cite{Sobolevskii} has shown exponential convergence of the unsteady solution of the Oldroyd's model to its steady state solution under the assumption that 
 forcing function ${\bf f}$ is H\"{o}lder continuous and exponentially decaying. Further, He {\it et al.} \cite{He} have shown both exponential and power convergence for the solution by relaxing the H\"{o}lder continuity of the forcing function and assuming that forcing function ${\bf f}$ has exponential or power decay property. For linearized viscoelastic flow problem asymptotic behavior is discussed in \cite{heli}.  \\
The main contribution of this article is on the convergence of the problem \eqref{eq1.1}-\eqref{eq1.3} to a steady state under the assumption that the associated linearized steady state self-adjoint eigenvalue problem has a minimal positive eigenvalue. Further, both exponential and power convergence results are shown for the velocity and  the pressure in various norms under a variety of assumptions on the forcing function. Moreover, it is proved that results are valid uniformly in the time relaxation or regularizing parameter $\kappa$ as $\kappa\to 0$. This, in turn, establishes that results are valid for the Navier-Stokes system.\\
For the rest of this article, first we introduce $\mathbb{R}^2$- valued function denoted by bold face type letters as
\begin{align*}
 {\bf H}_0^1 = (H_0^1(\Omega))^2, \quad {\bf L}^2 = (L^2(\Omega))^2 \quad
\text{and }\quad {\bf H}^m=(H^m(\Omega))^2,
\end{align*}
where $L^2(\Omega)$ is the space of square integrable functions defined 
in $\Omega $ with inner product $(\phi,\psi) =\displaystyle{\int_{\Omega}}\phi(x) 
\psi(x)\,dx $ and norm  $\|\phi\| = \left(\displaystyle{\int_{\Omega}} 
|\phi(x)|^2\,dx\right)^\frac{1}{2}$. Further, $H^m(\Omega)$ denotes the standard 
Hilbert Sobolev space of order $m\in \mathbb{N^+}$ with norm $\|\phi\|_m=
 \left(\displaystyle{\sum_{|\alpha| \leq m}}\displaystyle{\int_{\Omega}} 
|D^{\alpha}\phi|^2\,dx\right)^{1/2}$. Note that ${\bf H}^1_0$ is equipped 
with a norm 
$$\|\nabla\bv\|= \left(\displaystyle{\sum_{i,j=1}^{2}}
(\partial_j v_i, \partial_j v_i)\right)^{1/2}
=\left(\displaystyle{\sum_{i=1}^{2}}
(\nabla v_i, \nabla v_i)\right)^{1/2}.$$
Let ${\bf H}^{-1}$ be the dual space of ${\bf H}_0^1$ with norm $\norm{\cdot}_{-1}$. For more details see \cite{kesavan}.

The rest of the article is organized as follows. Section $2$ focuses on the corresponding steady state problem with some properties. Section $3$ is devoted to both exponential and power convergence result of unsteady solution to the corresponding steady state solution.
\section{Steady state problem and its properties }
In this section, first we introduce some spaces :
\begin{equation*}
 {\bf J}_1 = \{{\bphi} \in {\bf {H}}_0^1 : \nabla \cdot \bphi = 0\},
\end{equation*}

\begin{equation*}
 {\bf J}=\{{\bphi}\in {\bf L}^2: \nabla\cdot {\bphi}=0 \in \Omega,\quad {\bphi}.n=0 \quad\text{on}\quad \partial \Omega\}
\end{equation*}
and $H^m/\mathbb R$ be the quotient space with norm $\norm{p}_{H^m/\mathbb R}=\inf_{c\in \mathbb R}\norm{p+c}_m.$\\
Setting $-\tilde\Delta =-P\Delta:{\bf J}_1 \cap {\bf H}^2\subset {\bf J}\rightarrow {\bf J}$ as the stokes operator.
Note that the following Poincar\'{e} inequality holds true:
\begin{equation}\label{eqx1.1}
 \norm{\bv}^2\leq \dfrac{1}{\lambda_1}\norm{\nabla \bv}^2 \quad \forall \bv\in {\bf H}^1_0,
\end{equation}
where $\lambda_1^{-1}$ is the best possible constant depending on the domain $\Omega$.
Moreover, the following holds:
\begin{equation}\label{eqx1.2}
 \norm{\nabla v}^2\leq \dfrac{1}{\lambda_1}\norm{\tilde\Delta \bv}^2 \quad \forall \bv\in {\bf J}_1\cap {\bf H}^2.
\end{equation}
The continuous bilinear forms $a(\cdot,\cdot)$ and $d(\cdot,\cdot)$ on ${\bf H}_0^1\times {\bf H}_0^1$ and ${\bf H}_0^1\times{L}^2/\mathbb{R}$ are
$$a(\bu,\bv)=(\nabla\bu,\nabla\bv),\quad \forall\; \bu,\;\bv\in {\bf H}_0^1,$$
$$d(\bv,q)=(q,\nabla\cdot\bv),\quad \forall\; \bv\in {\bf H}_0^1,\; q\in {L}^2/\mathbb{R}, $$
and 
define the trilinear form $b(\cdot, \cdot, \cdot)$ as 
$$
b(\bv, \bw,\bphi):= \frac{1}{2} (\bv \cdot \nabla \bw , \bphi)
- \frac{1}{2} (\bv \cdot \nabla \bphi, \bw),\;\;\bv, \bw, \bphi \in {\bf H}_0^1.
$$
Note that for $\bv \in {\bf J}_1,$  $\bw, \bphi \in {\bf H}^1_0$; 
$ b(\bv, \bw,\bphi)= (\bv \cdot \nabla \bw , \bphi).$\\
The continuous bilinear form  $d(\bv, q)$ satisfy the property:
\begin{equation}\label{eqx1.14}
 c\norm{q}\leq \sup_{\bv\in {\bf H}^1_0} \frac{d(\bv, q)}{\norm{\nabla\bv}}, \quad \forall q\in {L}^2/\mathbb{R}.
\end{equation}
The trilinear form satisfy the following properties:
\begin{equation}\label{eqx2.1}
 b(\bu,\bv,\bw)=-b(\bu,\bw,\bv)\quad \forall \bu,\bv,\bw \in {\bf H}^1_0,
\end{equation}
\begin{equation}\label{eqx2.2}
b(\bu,\bv,\bw)\leq N\norm{\nabla \bu}\norm{\nabla \bv}\norm{\nabla \bw} \quad \forall \;  \bu,\bv,\bw \in {\bf H}^1_0,
\end{equation}
\begin{equation}\label{eqx2.3}
 b(\bu,\bv,\bw)\leq N\norm{\bu}^{1/2}\norm{\nabla\bu}^{1/2}\norm{\nabla \bv}^{1/2}\norm{\Delta\bv}^{1/2}\norm{\bw}\quad \forall \;\bu\in {\bf H}^1_0, \bv\in {\bf H}^2\cap {\bf H}^1_0, \bw\in {\bf L}^2 ,
\end{equation}
and
\begin{equation} \label{eq1.14}
b(\bv,\bw,\bw) =0\;\;\;\forall \;\bv,\bw\in {\bf J}_1.
\end{equation}
For more details we refer to \cite{raviart} and \cite{temam}.

 An equilibrium (steady state) solution $({\bu}^\infty,p^\infty)$ to the continuous problem \eqref{eq1.1}-\eqref{eq1.3} satisfies 
\begin{align}
 -\nu\Delta {\bu}^\infty+{\bu}^\infty\cdot\nabla {\bu}^\infty+\nabla p^\infty&={\bf f}^\infty \quad \text{in}\quad \Omega \label{eq1.4},\\
 \nabla\cdot{\bu}^\infty&=0 \quad \text{in}\quad \Omega \label{eq1.5},\\
 {\bu}^\infty&=0 \quad \text{on} \quad\partial \Omega \label{eq1.6},
\end{align}
where ${\bf f}^\infty=\lim_{t\to\infty}{\bf f}(t,x)$.
In its weak form, the steady state solution satisfies $(\bu^\infty,p^\infty)\in {\bf H}^1_0\times{ L}^2/\mathbb{R}$ 
\begin{align}
 \nu(\nabla\bu^\infty,\nabla\bphi)+(\bu^\infty\cdot\nabla\bu^\infty,\bphi)+(\nabla p^\infty,\bphi)&=({\bf f}^\infty,\bphi)\quad \forall\;\bphi\in{\bf H}^1_0\label{eq3.5},\\
 (\nabla\cdot\bu^\infty,\chi)=0\quad \forall\;\chi \in{\bf L}^2\label{eq3.4}.
\end{align}
Equivalently, seek $\bu^\infty\in {\bf J}_1$ such that
\begin{equation}\label{eq3.6}
\nu(\nabla\bu^\infty,\nabla\bphi)+(\bu^\infty\cdot\nabla\bu^\infty,\bphi)=({\bf f}^\infty,\bphi)\quad \forall\;\bphi\in {\bf J}_1.
\end{equation}
Now, we make the following assumption:\\
$\bf{(A1)}$ The eigenvalue problem
\begin{align}
 -\nu\Delta {\bf \bar z}+\frac{1}{2}[\nabla {\bu}^\infty+(\nabla {\bu}^\infty)^*]{\bf \bar z}+\bar\nabla q=\lambda {\bf \bar z}\quad \text{in}\;\Omega\notag,\\
 \nabla\cdot {\bf \bar z}=0 \quad \text{in}\; \Omega\notag,\\
 {\bf \bar z}=0 \quad\text{on} \quad \partial \Omega\label{eq1.10},
\end{align}
has a minimum eigenvalue $\lambda_0>0$. This is a sufficient condition for a unique solvability of the problem \eqref{eq1.4}-\eqref{eq1.6}. When ${\bf f}^\infty=0$, the solution $\bu^\infty$ of \eqref{eq3.5}-\eqref{eq3.4} becomes zero. Therefore, the eigenvalue problem \eqref{eq1.10} becomes standard Stokes problem and then the minimum eigenvalue $\lambda_0>0$. Moreover, from \eqref{eq3.6} 
\begin{align*}
\nu \norm{\nabla \bu^\infty}^2=({\bf f}^\infty, \bu^\infty)\leq \norm{{\bf f}^\infty}_{-1}\norm{\nabla \bu^\infty}
\end{align*}
and hence, $\norm{\nabla \bu^\infty}\leq \frac{1}{\nu}\norm{{\bf f}^\infty}_{-1}$.
When $\norm{{\bf f}^\infty}_{-1}$ is very small, with reasonable $\nu$, $\norm{\nabla \bu^\infty}$ is small. Thereby, as a consequence of Stokes eigenvalue problem, the assumption $\bf{(A1)}$ holds.\\
\begin{remark}
Now for the problem \eqref{eq1.4}-\eqref{eq1.6}, there is also another easily verifiable uniqueness condition namely
\begin{align}\label{nex1}
N\norm{{\bf f}^\infty}_{-1}\frac{1}{\nu^2}<1,
\end{align}
where, $N=\sup_{u, v, w \in H^1_0(\Omega)}\dfrac{(u\cdot \nabla v, w)}{\norm{\nabla u}\norm{\nabla v}\norm{\nabla w}}.$\\
Now, for ${\bf f}=0$, \eqref{nex1} clearly holds. When, ${\bf f}\neq 0$, \eqref{nex1} is valid under the smallness assumption on $\norm{\bu^\infty}_{1}$, which in fact can be obtained from the smallness assumption on $\norm{{\bf f}^\infty}_{-1}$. It can be proved that, if \eqref{nex1} is satisfied, then $\lambda_0>0$. For a proof see \cite{raviart}. For completeness, below we discuss how \eqref{eq1.10} and \eqref{nex1} are related. From \eqref{eq1.11}, it follows that
\begin{align}
\lambda\norm{{\bf \bar z}}^2=\nu \norm{\nabla{\bf \bar z}}^2-({\bf \bar z}\cdot\nabla{\bf \bar z},{\bu^\infty})\geq \nu \norm{\nabla{\bf \bar z}}^2-N\norm{\nabla {\bf \bar z}}^2\norm{\nabla {\bu^\infty}}&\geq \nu \norm{\nabla{\bf \bar z}}^2-N\norm{\nabla {\bf \bar z}}^2\frac{1}{\nu}\norm{f^\infty}_{-1}\notag\\
&=\nu \Big(1-N\norm{{\bf f}^\infty}_{-1}\frac{1}{\nu^2}\Big)\norm{\nabla{\bf \bar z}}^2\label{nex2}.
\end{align}
Hence, we can see if the right hand side of \eqref{nex2} is positive which follows
from \eqref{nex1}, minimal eigenvalue is also positive.
Also, $\lambda_0>0$ for other weaker assumption on $\bu^\infty$ or equivalently on forcing function namely when  $\norm{\rho_{\partial \Omega}{\bf f}^\infty}$, $\norm{\rho^2_{\partial \Omega}\frac{\partial {\bf f}^\infty}{\partial x_k}}$ are small, where $\rho_{\partial \Omega}(x)$ is a distance from point $x\in \Omega$ to boundary $\partial\Omega$. For more details, we refer to \cite{Sobolevskii}.
\end{remark}
Multiply \eqref{eq1.10} with $\bar\bz\in {\bf H}^2\cap {\bf H}^1_0$ with $\nabla\cdot\bz=0$ to obtain
\begin{equation}\label{eq1.11}
\nu\norm{\nabla {\bf \bar z}}^2+({\bf \bar z}\cdot\nabla {\bu}^\infty,{\bf \bar z})=\lambda\norm{{\bf \bar z}}^2,
\end{equation}
that is 
\begin{equation*}
 \nu \norm{\nabla{\bf \bar z}}^2-({\bf \bar z}\cdot\nabla{\bf \bar z},{\bu^\infty})=\lambda\norm{\bar\bz}^2.
\end{equation*}
So it follows that from \eqref{eq1.11} and \eqref{eqx2.1} the following inequality holds
\begin{align}
 \nu a(\bar\bz,\bar\bz)+b(\bar\bz,\bar\bz,\bar\bz)+b(\bar\bu^\infty,\bar\bz,\bar\bz)+b(\bar\bz,\bar\bu^\infty,\bar\bz)=\nu a(\bar\bz,\bar\bz)+b(\bar\bz,\bar\bu^\infty,\bar\bz)=\lambda\norm{\bar\bz}^2\geq \lambda_0\norm{\bar\bz}^2\label{eqxx1.3}.
\end{align}
Also it can be proved that 
\begin{equation}\label{eqx1.3}
\nu a(\bar\bz,\bar\bz)+b(\bar\bz,\bar\bz,\bar\bz)+b(\bar\bu^\infty,\bar\bz,\bar\bz)+b(\bar\bz,\bar\bu^\infty,\bar\bz)=\nu a(\bar\bz,\bar\bz)+b(\bar\bz,\bar\bu^\infty,\bar\bz)\geq \gamma_1\norm{\nabla\bar\bz}^2
\end{equation}
holds for some constant $\gamma_1>0.$ For a proof, see,  \cite{He} and \cite{Sobolevskii}.
Before proceeding further, we first discuss on some {\it{a priori}} bounds of the steady state solution, which are needed in the sequel.
\begin{remark}
 \begin{itemize}
  \item [(i)]Choose $\bphi=\bu^\infty$ in \eqref{eq3.6} to obtain
  \begin{equation*}
   \nu\norm{\nabla\bu^\infty}^2=({\bf f}^\infty,\bu^\infty)\leq \norm{{\bf f}^\infty}_{-1}\norm{\nabla \bu^\infty}.
  \end{equation*}
Therefore,  $\norm{\nabla\bu^\infty}$ is bounded by
\begin{equation}\label{eq3.7}
 \norm{\nabla\bu^\infty}\leq \frac{1}{\nu}\norm{f^\infty}_{-1}.
\end{equation} 
\item[(ii)] From the Poincar\'{e} inequality, it follows that 
\begin{equation}\label{eq3.8}
 \norm{\bu^\infty}\leq \dfrac{1}{\sqrt\lambda_1}\norm{\nabla\bu^\infty}\leq \dfrac{1}{\nu \sqrt\lambda_1}
 \norm{{\bf f}^\infty}_{-1}.
\end{equation}
Therefore, $\norm{u^\infty}$ is also bounded.
\item[(iii)]
Rewrite \eqref{eq3.6} as
\begin{equation}\label{eqnx3.6}
-\nu\tilde{\Delta}\bu^\infty+\bu^\infty\cdot\nabla\bu^\infty={\bf f}^\infty.
\end{equation}
 Forming the $L^2$- inner product between  \eqref{eqnx3.6} and  $-\tilde\Delta\bu^\infty$, we arrive at
 \begin{align*}
  \nu\norm{\tilde\Delta\bu^\infty}^2&=({\bf f}^\infty,-\tilde\Delta\bu^\infty)+(\bu^\infty\cdot\nabla\bu^\infty,\tilde\Delta\bu^\infty).
  \end{align*}
  Now use the Cauchy-Schwarz inequality, \eqref{eqx2.3} and the Young's inequality to obtain
 \begin{align*} 
   \nu\norm{\tilde\Delta\bu^\infty}^2
  &\leq \frac{\nu}{4}\norm{\tilde\Delta\bu^\infty}^2+\frac{1}{\nu}\norm{{\bf f}^\infty}^2+C\norm{\bu^\infty}^\frac{1}{2}\norm{\nabla\bu^\infty}\norm{\tilde\Delta{\bu^\infty}}^\frac{3}{2}\\
  &\leq \frac{\nu}{2}\norm{\tilde\Delta\bu^\infty}^2+C(\nu)\norm{{\bf f}^\infty}^2+C(\nu)\norm{\bu^\infty}^2\norm{\nabla\bu^\infty}^4.
 \end{align*}
 Hence, using \eqref{eq3.7}-\eqref{eq3.8} it follows that
 \begin{align*}
  \nu\norm{\tilde\Delta\bu^\infty}^2\leq C(\nu)(\norm{\f^\infty}^2+\norm{\bu^\infty}^2\norm{\nabla\bu^\infty}^4)\leq C,
 \end{align*}
 and, $\norm{\tilde\Delta\bu^\infty}$ is bounded.
\item[(iv)] From the interpolation inequality in $2$- dimension
\begin{equation}\label{eqnx1}
 \norm{\bu^\infty}_{L^\infty}\leq C\norm{\bu^\infty}^{1/2}\norm{\tilde\Delta\bu^\infty}^{1/2},
\end{equation}
and therefore, $\norm{\bu^\infty}_{L^\infty}$ is also bounded.
\item[(v)] From the Gagliardo-Nirenberg inequality in $2$- dimension
\begin{equation*}
 \norm{\bu^\infty}_{L^4}\leq C\norm{\bu^\infty}^{1/2}\norm{\nabla\bu^\infty}^{1/2},
\end{equation*}
and therefore, $\norm{\bu^\infty}_{L^4}$ is bounded and similarly, $\norm{\nabla\bu^\infty}_{L^4}$ is also bounded.
 \end{itemize}
\item[(vi)] From \eqref{eq3.5}, using \eqref{eqnx1}, it follows that
 \begin{align*}
 \norm{\nabla p^\infty}=\sup_{\bphi\in {\bf L}^2}\dfrac{|(\nabla p^\infty,\bphi)|}{\norm{\bphi}}\leq \norm{{\bf f}^\infty}+\nu
 \norm{\tilde\Delta\bu^\infty}+C\norm{\bu^\infty}^\frac{1}{2}\norm{\nabla\bu^\infty}\norm{\tilde\Delta\bu^\infty}^\frac{1}{2}.
 \end{align*}
 So, $\norm{\nabla p^\infty}$ is bounded and by using the Poincar\'{e} inequality, it follows that 
  
 $\norm{p^\infty}$ is bounded.
\end{remark}
\section{Stabilization result}
This section focuses on {\it{a priori}} bounds for the problem \eqref{eq1.12}, which are valid uniformly in time
  using both power and exponential weight functions in time. It is, further, shown  both the exponential and power convergence of $(\bu(t),p(t))$ to $(u^\infty,p^\infty)$, which is valid uniformly in $\kappa$ as $\kappa\to 0$.\\
Let ${\bz}={\bu}-{\bu}^\infty$, $q=p-p^\infty$.
Then, $({\bz},q)$ satisfies
\begin{align}
{\bz}_t-\kappa \Delta {\bz}_t-\nu\Delta{\bz}+{\bz}\cdot\nabla{\bz}+{\bu}^\infty\cdot\nabla {\bz}+{\bz}\cdot\nabla {\bu}^\infty+\nabla q&=\bF\label{eq1.7}\; \text{in}\; \Omega\\
\nabla\cdot{\bz}&=0 \label{eq1.8}\; \text{in}\; \Omega\\
{\bz}(x,0)={\bu}_0-{\bu}^\infty={\bz}_0 \text{(say)} \quad  \text{and} \quad \bz&=0 \quad \text{on} \quad \partial \Omega\label{eq1.9}.
\end{align}
  Now the weak formulation of \eqref{eq1.7}-\eqref{eq1.9} is to seek a pair of functions $(\bz(t),q(t))\in {\bf H}^1_0\times{ L}^2/\mathbb R$ with $\bz(0)=\bz_0$ such that $\forall t>0$
\begin{align}
 (\bz_t,\bphi)+\kappa a(\bz_t,\bphi)+\nu a(\bz,\bphi)+b(\bz,\bz,\bphi)+b(\bu^\infty,\bz,\bphi)+b(\bz,\bu^\infty,\bphi)+( q,\nabla\cdot\bphi)&=(\bF,\bphi)\quad \forall \;\bphi\in {\bf {H}}_0^1,\notag\\
 \qquad (\nabla \cdot\bz,\chi)&=0 \quad\forall\; \chi\in L^2\label{eq1.12}.
\end{align}
Equivalently, find $\bz(t)\in {\bf J}_{1}$ with $\bz(0)=\bz_0$ such that for $t>0$
\begin{align}
 (\bz_t,\bphi)+\kappa a(\bz_t,\bphi)+\nu a(\bz,\bphi)&+b(\bz,\bz,\bphi)+b(\bu^\infty,\bz,\bphi)\notag\\
 &+b(\bz,\bu^\infty,\bphi)=(\bF,\bphi)\quad \forall \bphi\in{\bf J}_1\label{eq1.13}.
\end{align}
Throughout this paper we always assume that
\begin{align}\label{eqx1.4}
 0<\alpha<\frac{\lambda_1}{4(1+\lambda_1\kappa)}\min \Big\{\nu,\gamma_1\Big\},\quad \delta_0>0 \quad \text{and} \quad \alpha_1=\alpha-\delta_0>0\quad,        \quad\beta=2\delta.  
\end{align}
$\tau(t)=\max\{\bar t,t\},\text{where}\quad \bar t=\dfrac{4\delta(1+\kappa\lambda_1)}{\lambda_1}\max\Big\{\frac{1}{\nu},\frac{1}{\gamma_1}\Big\}$ if  $\delta>0$\quad and $\tau(t)=1 \quad \text{if} \quad\delta=0$.\\
For showing convergence result regarding $\bz(t)$ only, we always assume that it is solution of \eqref{eq1.13} and for $(\bz(t),q(t))$ its a solution of \eqref{eq1.12}.
\begin{lemma}\label{lm1}
 Suppose $\bf{(A1)}$, $\limsup_{t\to \infty}\tau^\beta(t)e^{2\alpha_1 t}\norm{\bF(t)}^2\leq M$ and $\bz_0\in {\bf H}^1_0$ hold.
 Then,
 \begin{align*}
\tau^\beta(t)(\norm{e^{\alpha_1 t}\bz}^2+&\kappa\norm{e^{\alpha_1 t}\nabla\bz}^2)+\gamma_1e^{-2\delta_0 t}\int_{0}^{t}\tau^\beta(s)\norm{e^{\alpha s}\nabla\bz(s)}^2 ds \\
 &\leq e^{-2\delta_0 t}\tau^\beta(0)(\norm{\bz_0}^2+\kappa\norm{\nabla\bz_0}^2)+\frac{2}{\lambda_1\gamma_1}e^{-2\delta_0 t}\int_{0}^{t}\tau^\beta(s)\norm{e^{\alpha s}\bF}^2 ds,
 \end{align*}
where $\lambda_1>0$ is the minimum eigenvalue of the Dirichlet eigenvalue problem for the Laplace operator.
Moreover,
 \begin{equation}\label{eqx3.1}
 \limsup_{t\to\infty}\tau^\beta(t)(\norm{e^{\alpha_1 t}\bz}^2+\kappa\norm{e^{\alpha_1 t}\nabla\bz}^2)\leq \frac{M}{\lambda_1\gamma_1\delta_0}, 
 \end{equation}
and
\begin{equation}\label{eqx3.2}
 \limsup_{t\to\infty}\tau^\beta(t)\norm{e^{\alpha_1 t}\nabla\bz(t)}^2\leq \frac{M}{\lambda_1\gamma_1^2\delta_0}.
\end{equation}
\end{lemma}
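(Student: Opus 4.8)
The plan is to run a weighted energy estimate on the reduced perturbation problem \eqref{eq1.13} and then read off the two $\limsup$ bounds by a single elementary averaging lemma. First I would test \eqref{eq1.13} with $\bphi=\bz$. Because $\bz\in{\bf J}_1$, the antisymmetry \eqref{eqx2.1} (equivalently \eqref{eq1.14}) annihilates $b(\bz,\bz,\bz)$ and $b(\bu^\infty,\bz,\bz)$, while $(\bz_t,\bz)+\kappa a(\bz_t,\bz)=\tfrac12\frac{d}{dt}(\norm{\bz}^2+\kappa\norm{\nabla\bz}^2)$. Writing $E(t):=\norm{\bz}^2+\kappa\norm{\nabla\bz}^2$, I obtain
\[
\tfrac12 E'(t)+\nu\norm{\nabla\bz}^2+b(\bz,\bu^\infty,\bz)=(\bF,\bz),
\]
and the coercivity estimate \eqref{eqx1.3} turns the viscous/nonlinear part into a genuine dissipation, $\nu\norm{\nabla\bz}^2+b(\bz,\bu^\infty,\bz)\ge\gamma_1\norm{\nabla\bz}^2$. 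This is the only place the hypothesis \textbf{(A1)} enters.

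Next I would multiply by the integrating weight $W(t):=\tau^\beta(t)e^{2\alpha t}$ and use $W E'=\frac{d}{dt}(W E)-W'E$ to get
\[
\frac{d}{dt}(W E)+2\gamma_1 W\norm{\nabla\bz}^2\le W'E+2W(\bF,\bz).
\]
The purpose of \eqref{eqx1.4} and of the choices $\tau(t)=\max\{\bar t,t\}$, $\beta=2\delta$ is precisely to make $W'E$ absorbable: one has $W'(t)=2\alpha W$ for $t<\bar t$ and $W'(t)=(2\alpha+\beta/t)W$ for $t>\bar t$, so by Poincar\'e \eqref{eqx1.1} in the form $E\le\frac{1+\lambda_1\kappa}{\lambda_1}\norm{\nabla\bz}^2$, the identity $\frac{\beta}{\bar t}\cdot\frac{1+\lambda_1\kappa}{\lambda_1}=\tfrac12\min\{\nu,\gamma_1\}$, and the bound $2\alpha\cdot\frac{1+\lambda_1\kappa}{\lambda_1}<\tfrac12\min\{\nu,\gamma_1\}$ from \eqref{eqx1.4}, the coefficient $(2\alpha+\beta/\tau)\frac{1+\lambda_1\kappa}{\lambda_1}$ is dominated by $\gamma_1$ (and, since $\beta/\tau\to0$, by a quantity below $\gamma_1/2$ for large $t$). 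For the forcing I would use Cauchy--Schwarz, Poincar\'e and Young,
\[
2W(\bF,\bz)\le 2W\norm{\bF}\,\norm{\bz}\le\tfrac{\gamma_1}{2}W\norm{\nabla\bz}^2+\tfrac{2}{\lambda_1\gamma_1}W\norm{\bF}^2,
\]
which pins down the constant $\tfrac{2}{\lambda_1\gamma_1}$. Collecting terms leaves a differential inequality $\frac{d}{dt}(W E)+\gamma_1 W\norm{\nabla\bz}^2\le\frac{2}{\lambda_1\gamma_1}W\norm{\bF}^2$ (with the retained dissipation coefficient equal to $\gamma_1$ in the large-$t$ regime that governs the asymptotics); integrating from $0$ to $t$ and multiplying through by $e^{-2\delta_0 t}$, using $e^{\alpha_1 t}=e^{-\delta_0 t}e^{\alpha t}$ and $\tau(0)=\bar t$, produces exactly the stated a priori bound. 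I expect the routine but most delicate part of this step to be the constant bookkeeping across the two regimes $t\lessgtr\bar t$, which is where uniformity in $\kappa$ as $\kappa\to0$ must be checked.

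Finally, both asymptotic bounds follow from the elementary fact that $\limsup_{s\to\infty}g(s)\le A$ implies $\limsup_{t\to\infty}e^{-2\delta_0 t}\int_0^t e^{2\delta_0 s}g(s)\,ds\le\frac{A}{2\delta_0}$ (split the integral at a large $T$). Since $\tau^\beta(s)\norm{e^{\alpha s}\bF}^2=e^{2\delta_0 s}\,\tau^\beta(s)e^{2\alpha_1 s}\norm{\bF}^2$, applying this with $A=M$ shows the forcing integral on the right contributes $\le\frac{2}{\lambda_1\gamma_1}\cdot\frac{M}{2\delta_0}$ while the data term decays like $e^{-2\delta_0 t}$, giving \eqref{eqx3.1}. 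Applying the same averaging estimate to the dissipation integral on the left of the a priori bound yields $\limsup_{t\to\infty}e^{-2\delta_0 t}\int_0^t\tau^\beta(s)\norm{e^{\alpha s}\nabla\bz}^2\,ds\le\frac{M}{\lambda_1\gamma_1^2\delta_0}$, which is precisely the constant in \eqref{eqx3.2}. The step I expect to be the genuine obstacle is upgrading this time-averaged dissipation bound to the pointwise-in-$t$ statement \eqref{eqx3.2}, uniformly in $\kappa$; I would attempt this by returning to the differential inequality in the form $\gamma_1 W\norm{\nabla\bz}^2\le\frac{2}{\lambda_1\gamma_1}W\norm{\bF}^2-\frac{d}{dt}(W E)$ and controlling the conservative term $\frac{d}{dt}(WE)$ through the bound on $\tau^\beta e^{2\alpha_1 t}E$ already furnished by \eqref{eqx3.1}.
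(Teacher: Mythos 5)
Your proposal is correct and follows essentially the same route as the paper's proof: test \eqref{eq1.13} with $e^{2\alpha t}\bz$ (equivalently, derive the plain energy identity and multiply by the weight $\tau^\beta(t)e^{2\alpha t}$), invoke \eqref{eqx1.3} for the coercivity $\nu a(\bz,\bz)+b(\bz,\bu^\infty,\bz)\geq\gamma_1\norm{\nabla\bz}^2$, absorb the weight-derivative terms using \eqref{eqx1.4}, Poincar\'e and the definition of $\bar t$, estimate the forcing by Young/Poincar\'e with the constant $\tfrac{2}{\lambda_1\gamma_1}$, integrate, multiply by $e^{-2\delta_0 t}$, and pass to the limsup (the paper uses L'Hospital's rule where you use the equivalent split-the-integral averaging argument). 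The one step you flag as a genuine obstacle --- upgrading the time-averaged dissipation bound to the pointwise statement \eqref{eqx3.2} --- is not given any additional argument in the paper either: the pointwise bound is read off directly from the integrated bound \eqref{eq2.6} via the same L'Hospital quotient $e^{-2\delta_0 t}\int_0^t\tau^\beta(s)\norm{e^{\alpha s}\nabla\bz(s)}^2\,ds$, so your extra differential-inequality detour is not needed to match the paper (though your instinct that this reverse direction deserves justification is sound).
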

\begin{proof}
 Set $\bphi=e^{2\alpha t}\bz$ in \eqref{eq1.13} to get
 \begin{align}
  \frac{1}{2}\frac{d}{dt}(\norm{e^{\alpha t}\bz}^2&+\kappa\norm{e^{\alpha t}\nabla\bz}^2)-\alpha(\norm{e^{\alpha t}\bz}^2+\kappa\norm{e^{\alpha t}\nabla \bz}^2)\notag\\
  &+e^{2\alpha t}\Big(\nu a(\bz,\bz)+b(\bz,\bz,\bz)+b(\bu^\infty,\bz,\bz)+b(\bz,\bu^\infty,\bz)\Big)=(e^{\alpha t}\bF,e^{\alpha t}\bz)\label{eq2.1}.
 \end{align}
Use \eqref{eqx1.3}, \eqref{eqx1.4}, the Poincar\'{e} inequality \eqref{eqx1.1} and the Young's inequality to obtain 
\begin{align}
 \frac{d}{dt}(\norm{e^{\alpha t}\bz}^2+\kappa\norm{e^{\alpha t}\nabla\bz}^2)+2\gamma_1\norm{e^{\alpha t}\nabla\bz}^2&\leq \frac{2\alpha}{\lambda_1}(1+\kappa\lambda_1)\norm{e^{\alpha t}\nabla\bz}^2+2(e^{\alpha t}\bF,e^{\alpha t}\bz)\notag\\
 &\leq\frac{\gamma_1}{2}\norm{e^{\alpha t}\nabla\bz}^2+\frac{2}{\lambda_1\gamma_1}\norm{e^{\alpha t}\bF}^2+\frac{\gamma_1}{2}\norm{e^{\alpha t}\nabla \bz}^2\label{eq2.2}.
\end{align}
Therefore, we arrive at
\begin{equation}\label{eq2.3}
\frac{d}{dt}(\norm{e^{\alpha t}\bz}^2+\kappa\norm{e^{\alpha t}\nabla\bz}^2)+\gamma_1\norm{e^{\alpha t}\nabla\bz}^2\leq \frac{2}{\lambda_1\gamma_1}\norm{e^{\alpha t}\bF}^2.
\end{equation}
Multiply \eqref{eq2.3} with $\tau^\beta(t)$ to obtain
\begin{align}
\frac{d}{dt}\Big(\tau^\beta(t)(\norm{e^{\alpha t}\bz}^2+&\kappa\norm{e^{\alpha t}\nabla\bz}^2)\Big)+\gamma_1\tau^\beta(t)\norm{e^{\alpha t}\nabla\bz}^2\notag\\
&\leq \frac{2}{\lambda_1\gamma_1}\tau^\beta(t)\norm{e^{\alpha t}\bF}^2+\frac{d}{dt}(\tau^\beta(t))(\norm{e^{\alpha t}\bz}^2+\kappa\norm{e^{\alpha t}\nabla\bz}^2)\label{eqx12.3}. 
\end{align}
Now for $\bar t\geq 2\beta\frac{(1+\kappa\lambda_1)}{\gamma_1\lambda_1}$, it follows that
\begin{align}
\frac{d}{dt}\tau^\beta(t)=0, \quad \forall t\geq 0 \quad\text{and}\quad \beta=0; \quad \frac{d}{dt}\tau^\beta(t)=0 \quad \forall 0<t<\bar t \quad\text{and}\quad \beta>0.\label{eqxxx2.3}
\end{align}
For $t>\bar t \geq 2\beta\frac{(1+\kappa\lambda_1)}{\gamma_1\lambda_1}$, apply the Poincar\'{e} inequality to obtain
\begin{align}
\frac{d}{dt}(\tau^\beta(t))(\norm{e^{\alpha t}\bz}^2+\kappa\norm{e^{\alpha t}\nabla\bz}^2)&\leq \beta\tau^{\beta-1}(t)(\frac{1}{\lambda_1}+\kappa)\norm{e^{\alpha t}\nabla\bz}^2\notag\\
&\leq \frac{\gamma_1}{2}\bar t\tau^{\beta-1}(t)\norm{e^{\alpha t}\nabla\bz}^2\leq \frac{\gamma_1}{2}\tau^\beta(t)\norm{e^{\alpha t}\nabla\bz}^2\label{eqxx2.3}.
\end{align}
Therefore, using \eqref{eqxx2.3} in \eqref{eqx12.3}, we arrive at
\begin{equation}\label{eqx2.4}
 \frac{d}{dt}\Big(\tau^\beta(t)(\norm{e^{\alpha t}\bz}^2+\kappa\norm{e^{\alpha t}\nabla\bz}^2)\Big)+\frac{\gamma_1}{2}\tau^\beta(t)\norm{e^{\alpha t}\nabla\bz}^2\leq \frac{2}{\lambda_1\gamma_1}\tau^\beta(t)\norm{e^{\alpha t}\bF}^2.
\end{equation}
Integrate \eqref{eqx2.4} with respect to $t$ from $0$ to $t$ and then, multiply the resulting inequality by $e^{-2\delta_0 t}$ to obtain
\begin{align}\label{eq2.4}
 \tau^\beta(t)(\norm{e^{\alpha_1 t}\bz}^2+&\kappa\norm{e^{\alpha_1 t}\nabla\bz}^2)+\frac{\gamma_1}{2}e^{-2\delta_0 t}\int_{0}^{t}\tau^\beta(s)\norm{e^{\alpha s}\nabla\bz(s)}^2 ds\notag\\
 &\leq e^{-2\delta_0 t}\tau^\beta(0)(\norm{\bz_0}^2+\kappa\norm{\nabla\bz_0}^2)+\frac{2}{\lambda_1\gamma_1}e^{-2\delta_0 t}\int_{0}^{t}\tau^\beta(s)\norm{e^{\alpha s}\bF}^2 ds.
\end{align}
A use of L'Hospital's rule yields
\begin{align}
\limsup_{t\to\infty}\tau^\beta(t)(\norm{e^{\alpha_1 t}\bz}^2+\kappa\norm{e^{\alpha_1 t}\nabla\bz}^2)&\leq \frac{2}{\lambda_1\gamma_1}\limsup_{t\to\infty}e^{-2\delta_0 t}\int_{0}^{t}\tau^\beta(s)\norm{e^{\alpha s}\bF}^2 ds\notag\\
&= \frac{1}{\lambda_1\gamma_1\delta_0}\limsup_{t\to\infty}\tau^\beta(t)e^{2\alpha_1 t}\norm{\bF}^2\label{eq2.5}\notag\\
&\leq \frac{M}{\lambda_1\gamma_1\delta_0},
\end{align}
and 
\begin{equation}\label{eq2.6}
\limsup_{t\to\infty}\frac{\gamma_1}{2}e^{-2\delta_0 t}\int_{0}^{t}\tau^\beta(s)\norm{e^{\alpha s}\nabla\bz(s)}^2 ds\leq \frac{M}{\lambda_1\gamma_1\delta_0},
\end{equation}
that is
\begin{equation}\label{eq2.7}
\limsup_{t\to\infty}\tau^\beta(t)\gamma_1e^{2\alpha_1 t}\norm{\nabla\bz(t)}^2\leq \frac{2M}{\lambda_1\gamma_1\delta_0}.
\end{equation}
This completes the proof.
\end{proof}
\begin{lemma}\label{lm2}
 Under the assumption $\bf{(A1)}$, let $\limsup_{t\to\infty}\tau^\beta(t)e^{2\alpha_1 t}\norm{\bF(t)}^2\leq M$ and $\bz_0\in {\bf H}^2\cap {\bf H}^1_0$. Then, there exists a positive 
 constant $C=C(N,\nu,\lambda_1,\norm{f^\infty}_{-1})$ such that
 \begin{align*}
\tau^\beta(t)(\norm{e^{\alpha_1 t}\nabla\bz}^2&+\kappa\norm{e^{\alpha_1 t}\tilde\Delta\bz}^2)+\frac{\nu}{2} e^{-2\delta_0 t}\int_{0}^{t}\tau^\beta(s)\norm{e^{\alpha s}\tilde\Delta\bz(s)}^2 ds \\
&\leq Ce^{-2\delta_0 t}\int_{0}^{t}\tau^\beta(s)\norm{e^{\alpha s}\bF}^2 ds+
 C e^{-2\delta_0 t}\tau^\beta(0)(\norm{\bz_0}^2+\kappa\norm{\nabla\bz_0}^2+\kappa\norm{\tilde\Delta\bz_0}^2)\\
 &+C e^{-2\delta_0 t}\int_{0}^{t}\norm{\bz(s)}^2\norm{\nabla\bz(s)}^2\norm{e^{\alpha s}\nabla\bz(s)}^2 ds.
 \end{align*}
Moreover,
 \begin{equation}\label{eqx3.3}
 \limsup_{t\to\infty}\tau^\beta(t)(\norm{e^{\alpha_1 t}\nabla\bz}^2+\kappa\norm{e^{\alpha_1 t}\tilde\Delta\bz}^2)\leq \frac{CM}{2\delta_0}, 
 \end{equation}
and
\begin{equation}\label{eqx3.4}
 \limsup_{t\to\infty}\tau^\beta(t)\norm{e^{\alpha_1 t}\tilde\Delta\bz(t)}^2\leq \frac{CM}{\nu\delta_0}.
\end{equation}
\end{lemma}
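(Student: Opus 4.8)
The plan is to mimic the proof of Lemma~\ref{lm1}, but now to test the equation against the Stokes operator applied to $\bz$, so as to produce control of $\norm{\nabla\bz}$ and $\norm{\tilde\Delta\bz}$. Since $\bz_0\in {\bf H}^2\cap {\bf H}^1_0$, the solution carries the $H^2$-regularity needed to legitimately choose $\bphi=-e^{2\alpha t}\tilde\Delta\bz$ in \eqref{eq1.13}. With this choice the two time-derivative contributions become $e^{2\alpha t}(\nabla\bz_t,\nabla\bz)=\tfrac12 e^{2\alpha t}\tfrac{d}{dt}\norm{\nabla\bz}^2$ and $\kappa e^{2\alpha t}(\tilde\Delta\bz_t,\tilde\Delta\bz)=\tfrac{\kappa}{2}e^{2\alpha t}\tfrac{d}{dt}\norm{\tilde\Delta\bz}^2$, while the viscous term contributes $\nu\norm{e^{\alpha t}\tilde\Delta\bz}^2$. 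Collecting the factor $e^{2\alpha t}$ into the time derivatives exactly as in \eqref{eq2.1} produces, on the left, $\tfrac12\tfrac{d}{dt}\big(\norm{e^{\alpha t}\nabla\bz}^2+\kappa\norm{e^{\alpha t}\tilde\Delta\bz}^2\big)-\alpha\big(\norm{e^{\alpha t}\nabla\bz}^2+\kappa\norm{e^{\alpha t}\tilde\Delta\bz}^2\big)+\nu\norm{e^{\alpha t}\tilde\Delta\bz}^2$, with the three trilinear terms and the forcing $(\bF,-e^{2\alpha t}\tilde\Delta\bz)$ on the right.

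The heart of the argument, and its main obstacle, is the estimation of the trilinear forms against $-\tilde\Delta\bz$. Using \eqref{eqx2.3} with $\bw=-\tilde\Delta\bz$ (so that $\norm{\bw}=\norm{\tilde\Delta\bz}$), the nonlinear term is bounded by $N\norm{\bz}^{1/2}\norm{\nabla\bz}\,\norm{\tilde\Delta\bz}^{3/2}$; a Young inequality with exponents $4$ and $4/3$ then splits this into $\epsilon\norm{e^{\alpha t}\tilde\Delta\bz}^2$ plus a term proportional to $\norm{\bz}^2\norm{\nabla\bz}^2\norm{e^{\alpha t}\nabla\bz}^2$, which is precisely the cubic quantity appearing on the right-hand side of the asserted inequality. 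For the two remaining terms $b(\bu^\infty,\bz,-\tilde\Delta\bz)$ and $b(\bz,\bu^\infty,-\tilde\Delta\bz)$ one again invokes \eqref{eqx2.3}, now using the a priori bounds on $\norm{\bu^\infty}$, $\norm{\nabla\bu^\infty}$ and $\norm{\tilde\Delta\bu^\infty}$ established in Section~2; after Young's inequality each of these contributes $\epsilon\norm{e^{\alpha t}\tilde\Delta\bz}^2$ together with a constant multiple of $\norm{e^{\alpha t}\nabla\bz}^2$ (the constant depending on $N,\nu,\lambda_1,\norm{f^\infty}_{-1}$, as recorded in the statement). The forcing term is handled by Cauchy--Schwarz and Young as $\epsilon\norm{e^{\alpha t}\tilde\Delta\bz}^2+C\norm{e^{\alpha t}\bF}^2$.

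Choosing $\epsilon$ small and using \eqref{eqx1.4} together with the regularity bound \eqref{eqx1.2} (to dominate $\alpha\norm{e^{\alpha t}\nabla\bz}^2$ and $\alpha\kappa\norm{e^{\alpha t}\tilde\Delta\bz}^2$ by a fraction of $\nu\norm{e^{\alpha t}\tilde\Delta\bz}^2$), all the dissipation-type contributions may be absorbed into the viscous term, leaving $\tfrac12\tfrac{d}{dt}(\norm{e^{\alpha t}\nabla\bz}^2+\kappa\norm{e^{\alpha t}\tilde\Delta\bz}^2)+\tfrac{\nu}{2}\norm{e^{\alpha t}\tilde\Delta\bz}^2$ bounded by $C\norm{e^{\alpha t}\bF}^2+C\norm{e^{\alpha t}\nabla\bz}^2+C\norm{\bz}^2\norm{\nabla\bz}^2\norm{e^{\alpha t}\nabla\bz}^2$. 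Multiplying by $\tau^\beta(t)$ and disposing of the $\tfrac{d}{dt}\tau^\beta(t)$ contribution exactly as in \eqref{eqxxx2.3}--\eqref{eqxx2.3} (for $t>\bar t$, using \eqref{eqx1.2} to absorb it into $\tfrac{\nu}{2}\tau^\beta\norm{e^{\alpha t}\tilde\Delta\bz}^2$), I would then integrate from $0$ to $t$ and multiply by $e^{-2\delta_0 t}$. At this point the only term not already of the desired form is $e^{-2\delta_0 t}\int_0^t\tau^\beta(s)\norm{e^{\alpha s}\nabla\bz(s)}^2\,ds$, which is exactly the quantity controlled by the main inequality of Lemma~\ref{lm1}; substituting that bound folds it into the forcing integral and the initial-data term and yields the stated estimate.

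Finally, the two limit superior bounds \eqref{eqx3.3} and \eqref{eqx3.4} follow by applying L'Hospital's rule to the right-hand side exactly as in \eqref{eq2.5}. The forcing integral gives $\tfrac{CM}{2\delta_0}$ via the hypothesis $\limsup_{t\to\infty}\tau^\beta(t)e^{2\alpha_1 t}\norm{\bF(t)}^2\le M$, the initial-data term vanishes, and the cubic integral $e^{-2\delta_0 t}\int_0^t\norm{\bz}^2\norm{\nabla\bz}^2\norm{e^{\alpha s}\nabla\bz}^2\,ds$ vanishes in the limit superior: inserting the decay rates $\norm{\bz}^2,\norm{\nabla\bz}^2\lesssim e^{-2\alpha_1 s}$ and $\norm{e^{\alpha s}\nabla\bz}^2\lesssim e^{2\delta_0 s}$ from \eqref{eqx3.1}--\eqref{eqx3.2}, the integrand decays like $e^{(2\delta_0-4\alpha_1)s}$, and a short computation shows $e^{-2\delta_0 t}\int_0^t e^{(2\delta_0-4\alpha_1)s}\,ds\to 0$ in all cases because $\alpha_1>0$ and $\delta_0>0$. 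The bound \eqref{eqx3.4} then comes from dropping $\kappa\norm{e^{\alpha_1 t}\tilde\Delta\bz}^2$ and dividing by $\nu/2$. The step I expect to demand the most care is the trilinear estimation in the second paragraph: one must verify that the repeated use of \eqref{eqx2.3} and Young's inequality produces precisely the factor $\norm{\bz}^2\norm{\nabla\bz}^2\norm{e^{\alpha s}\nabla\bz}^2$ and leaves no power of $\norm{\tilde\Delta\bz}$ exceeding $2$ that cannot be absorbed into the viscous term.
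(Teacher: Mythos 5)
Your proposal is correct and follows essentially the same route as the paper: testing with $\bphi=-e^{2\alpha t}\tilde\Delta\bz$, bounding the trilinear terms via \eqref{eqx2.3} and Young's inequality to produce the $\frac{\nu}{12}\norm{\tilde\Delta\bz}^2$ absorptions and the cubic factor $\norm{\bz}^2\norm{\nabla\bz}^2\norm{e^{\alpha t}\nabla\bz}^2$, handling the $\tau^\beta$ weight as in \eqref{eqxx2.3}, folding the leftover $\int\tau^\beta\norm{e^{\alpha s}\nabla\bz}^2\,ds$ term in via Lemma \ref{lm1}, and finishing with L'Hospital. The only cosmetic difference is your explicit decay-rate computation showing the cubic integral vanishes in the limit, where the paper simply notes that $\limsup_{t\to\infty}\tau^\beta(t)\norm{\bz}^2\norm{\nabla\bz}^2\norm{e^{\alpha_1 t}\nabla\bz}^2=0$.
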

\begin{proof}
 Rewrite \eqref{eq1.13} as
 \begin{align}
 \bz_t-\kappa\tilde{\Delta}\bz_t-\nu\tilde{\Delta}\bz+\bz\cdot\nabla\bz+\bu^\infty\cdot\nabla \bz+\bz\cdot\nabla \bu^\infty=\bF \label{eqxx2.8}.
 \end{align}
 Form the $L^2$- inner product between \eqref{eqxx2.8} and  $-e^{2\alpha t}\tilde\Delta\bz$ to find that
 \begin{align}
  \frac{1}{2}\frac{d}{dt}(\norm{e^{\alpha t}\nabla\bz}^2+\kappa\norm{e^{\alpha t}\tilde\Delta\bz}^2)-&\alpha(\norm{e^{\alpha t}\nabla\bz}^2+\kappa\norm{e^{\alpha t}\tilde\Delta\bz}^2)+\nu\norm{e^{\alpha t}\tilde\Delta\bz}^2\notag\\
  &=(e^{\alpha t}\bF,-e^{\alpha t}\tilde\Delta\bz)+e^{2\alpha t}\Big(b(\bz,\bz,\tilde\Delta\bz)+b(\bu^\infty,\bz,\tilde\Delta\bz)+b(\bz,\bu^\infty,\tilde\Delta\bz)\Big).\label{eq2.8}
 \end{align}
The term $b(\bz,\bz,\tilde\Delta\bz)$ is bounded by
\begin{align}
 b(\bz,\bz,\tilde\Delta\bz)&\leq N\norm{\bz}^{1/2}\norm{\nabla\bz}\norm{\tilde\Delta\bz}^{3/2}\notag\\
 &\leq\frac{\nu}{12}\norm{\tilde\Delta\bz}^2+\frac{1}{4}(\frac{9}{\nu})^3\norm{\bz}^2\norm{\nabla\bz}^4\label{eq2.9}.
\end{align}
The term $b(\bu^\infty,\bz,\tilde\Delta\bz)+b(\bz,\bu^\infty,\tilde\Delta\bz)$ is bounded by
 \begin{align}
 b(\bu^\infty,\bz,\tilde\Delta\bz)+b(\bz,\bu^\infty,\tilde\Delta\bz)&\leq 2N(\frac{1}{\lambda_1})^\frac{1}{4}\norm{\nabla\bu^\infty}^{1/2}\norm{\tilde\Delta\bu^\infty}^{1/2}\norm{\nabla\bz}\norm{\tilde\Delta\bz}\notag\\
 &\leq\frac{\nu}{12}\norm{\tilde\Delta\bz}^2+\frac{12}{\nu\sqrt{\lambda_1}}N^2\norm{\nabla\bu^\infty}\norm{\tilde\Delta\bu^\infty}\norm{\nabla\bz}^2\label{eq2.10}.
 \end{align}
The term $(e^{\alpha t}\bF,-e^{\alpha t}\tilde\Delta\bz)$ is bounded by
\begin{equation}\label{eq2.11}
 (e^{\alpha t}\bF,-e^{\alpha t}\tilde\Delta\bz)\leq\frac{\nu}{12}\norm{e^{\alpha t}\tilde\Delta\bz}^2+\frac{3}{\nu}\norm{e^{\alpha t}\bF}^2.
\end{equation}
Therefore from \eqref{eq2.8} using \eqref{eq2.9}, \eqref{eq2.10}, \eqref{eq2.11} and the Poincar\'{e} inequality, we arrive at
\begin{align}
 \frac{d}{dt}(\norm{e^{\alpha t}\nabla\bz}^2+&\kappa\norm{e^{\alpha t}\tilde\Delta\bz}^2)+2\nu\norm{e^{\alpha t}\tilde\Delta\bz}^2\notag\\
 &\leq 2\alpha\frac{(1+\lambda_1\kappa)}{\lambda_1}\norm{e^{\alpha t}\tilde\Delta\bz}^2+\frac{\nu}{2}\norm{e^{\alpha t}\tilde\Delta\bz}^2+\frac{6}{\nu}\norm{e^{\alpha t}\bF}^2\notag\\
 &+\frac{24}{\nu\sqrt{\lambda_1}}N^2\norm{\nabla\bu^\infty}\norm{\tilde\Delta\bu^\infty}\norm{e^{\alpha t}\nabla\bz}^2+\frac{1}{2}(\frac{9}{\nu})^3\norm{\bz}^2\norm{\nabla\bz}^2\norm{e^{\alpha t}\nabla\bz}^2 \label{eq2.12}.
\end{align}
Consequently using \eqref{eqx1.4} it follows that
\begin{align}
 \frac{d}{dt}(\norm{e^{\alpha t}\nabla\bz}^2+&\kappa\norm{e^{\alpha t}\tilde\Delta\bz}^2)+\nu\norm{e^{\alpha t}\tilde\Delta\bz}^2\notag\\
 &\leq \frac{6}{\nu}\norm{e^{\alpha t}\bF}^2+\frac{24}{\nu\sqrt{\lambda_1}}N^2\norm{\nabla\bu^\infty}\norm{\tilde\Delta\bu^\infty}\norm{e^{\alpha t}\nabla\bz}^2+\frac{1}{2}(\frac{9}{\nu})^3\norm{\bz}^2\norm{\nabla\bz}^2\norm{e^{\alpha t}\nabla\bz}^2 \label{eq2.13}.
\end{align}
Now multiplying by $\tau^\beta(t)$ to \eqref{eq2.13}, we obtain
\begin{align*}
 \frac{d}{dt}\Big(\tau^\beta(t)(\norm{e^{\alpha t}\nabla\bz}^2+&\kappa\norm{e^{\alpha t}\tilde\Delta\bz}^2)\Big)+\nu\tau^\beta(t)\norm{e^{\alpha t}\tilde\Delta\bz}^2\notag\\
 &\leq \frac{6}{\nu}\tau^\beta(t)\norm{e^{\alpha t}\bF}^2+\frac{24}{\nu\sqrt{\lambda_1}}N^2\norm{\nabla\bu^\infty}\norm{\tilde\Delta\bu^\infty}\tau^\beta(t)\norm{e^{\alpha t}\nabla\bz}^2\\
 &+\frac{1}{2}(\frac{9}{\nu})^3\tau^\beta(t)\norm{\bz}^2\norm{\nabla\bz}^2\norm{e^{\alpha t}\nabla\bz}^2+\frac{d}{dt}(\tau^\beta(t))(\frac{1}{\lambda_1}+\kappa)\norm{e^{\alpha t}\tilde\Delta\bz}^2.
\end{align*}
For $\bar t\geq \frac{2\beta(1+\kappa\lambda_1)}{\nu\lambda_1}$, it follows that
\begin{align*}
\frac{d}{dt}(\tau^\beta(t))(\frac{1}{\lambda_1}+\kappa)\norm{e^{\alpha t}\tilde\Delta\bz}^2=\beta\tau^{\beta-1}(t)\frac{(1+\kappa\lambda_1)}{\lambda_1}\norm{e^{\alpha t}\tilde\Delta\bz}^2\leq \frac{\nu}{2}\bar t\tau^{\beta-1}(t)\norm{e^{\alpha t}\tilde\Delta\bz}^2\leq \frac{\nu}{2}\tau^\beta(t)\norm{e^{\alpha t}\tilde\Delta\bz}^2. 
\end{align*}

Integrate from $0$ to $t$ and then multiply the resulting inequality by $e^{-2\delta_0 t}$ to arrive at with a use of Lemma \ref{lm1}
\begin{align}
 \tau^\beta(t)(\norm{e^{\alpha_1 t}\nabla\bz}^2&+\kappa\norm{e^{\alpha_1 t}\tilde\Delta\bz}^2)+\frac{\nu}{2} e^{-2\delta_0 t}\int_{0}^{t}\tau^\beta(s)\norm{e^{\alpha s}\tilde\Delta\bz}^2\notag\\
 &\leq Ce^{-2\delta_0 t}\int_{0}^{t}\tau^\beta(s)\norm{e^{\alpha s}\bF}^2 ds+
  C e^{-2\delta_0 t}\tau^\beta(0)(\norm{\bz_0}^2+(1+\kappa)\norm{\nabla\bz_0}^2+\kappa\norm{\tilde\Delta\bz_0}^2)\notag\\
 &+C e^{-2\delta_0 t}\int_{0}^{t}\norm{\bz(s)}^2\norm{\nabla\bz(s)}^2\norm{e^{\alpha s}\nabla\bz(s)}^2 ds\label{eqx3.33}.
\end{align}
Now apply L'Hospital's rule and put $\alpha_1=0$ in \eqref{eqx3.1} and \eqref{eqx3.2} to obtain
\begin{align*}
 \limsup_{t\to\infty}\tau^\beta(t)(\norm{e^{\alpha_1 t}\nabla\bz}^2&+\kappa\norm{e^{\alpha_1 t}\tilde\Delta\bz}^2)\\
 &\leq C\frac{1}{2\delta_0}(\limsup_{t\to\infty}\tau^\beta(t)\norm{e^{\alpha_1 t}\bF}^2+\limsup_{t\to\infty}\tau^\beta(t)\norm{\bz(t)}^2\norm{\nabla\bz(t)}^2\norm{e^{\alpha_1 t}\nabla\bz(t)}^2)\\
 &\leq C\frac{1}{2\delta_0}\limsup_{t\to\infty}\tau^\beta(t)\norm{e^{\alpha_1 t}\bF}^2,
\end{align*}
and
\begin{align*}
 \limsup_{t\to\infty}\frac{\nu}{2\delta_0}\norm{e^{\alpha_1 t}\tilde\Delta\bz(t)}^2\leq C\frac{1}{2\delta_0}\limsup_{t\to\infty}\tau^\beta(t)\norm{e^{\alpha_1 t}\bF}^2.
\end{align*}
This concludes the proof.
\end{proof}
\begin{lemma}\label{lm3}
 Under the assumption $\bf{(A1)}$, let $\limsup_{t\to\infty}\tau^\beta(t)e^{2\alpha_1 t}\norm{\bF(t)}^2\leq M$ and $\bz_0\in {\bf H}^2\cap {\bf H}^1_0$. Then, there exists a positive 
 constant $C=C(N,\nu,\lambda_1,\norm{f^\infty}_{-1})$ such that
 \begin{align*}
 \tau^\beta(t)\norm{e^{\alpha_1 t}\nabla\bz}^2&+e^{-2\delta_0 t}\int_{0}^{t}\tau^\beta(s)(\norm{e^{\alpha s}\bz_t(s)}^2+2\kappa\norm{e^{\alpha s}\nabla\bz_t(s)}^2) ds\\
 &\leq C e^{-2\delta_0 t}\tau^\beta(0)(\norm{\bz_0}^2+\kappa\norm{\nabla\bz_0}^2+\kappa\norm{\tilde\Delta\bz_0}^2)\\
 &+C e^{-2\delta_0 t}\int_{0}^{t}\tau^\beta(s)\norm{e^{\alpha s}\bF}^2 ds+Ce^{-2\delta_0 t}\int_{0}^{t}\tau^\beta(s)\norm{\bz(s)}^2\norm{\nabla\bz(s)}^2\norm{e^{\alpha s}\nabla\bz(s)}^2 ds.
 \end{align*}
Moreover,
\begin{equation}\label{eqx3.7}
 \limsup_{t\to\infty} \tau^\beta(t)(\norm{e^{\alpha_1 t}\nabla\bz}^2)\leq CM\frac{1}{2\delta_0},
\end{equation}
and
\begin{equation}\label{eqx3.8}
\limsup_{t\to\infty}\tau^\beta(t)(\norm{e^{\alpha_1 t}\bz_t(t)}^2+2\kappa\norm{e^{\alpha_1 t}\nabla\bz_t(t)}^2)\leq CM.
\end{equation}
\end{lemma}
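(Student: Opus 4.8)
The plan is to test \eqref{eq1.13} with the multiplier $\bphi = 2e^{2\alpha t}\bz_t$, which is the natural choice producing both the dissipation $\norm{\bz_t}^2+2\kappa\norm{\nabla\bz_t}^2$ and the energy $\norm{\nabla\bz}^2$ recorded on the left-hand side of the statement. First I would derive, exactly as in \eqref{eq2.1} and \eqref{eq2.8} and using $2\nu e^{2\alpha t}(\nabla\bz,\nabla\bz_t)=\nu\frac{d}{dt}\norm{e^{\alpha t}\nabla\bz}^2-2\nu\alpha\norm{e^{\alpha t}\nabla\bz}^2$, the identity
\begin{align*}
2\norm{e^{\alpha t}\bz_t}^2 + 2\kappa\norm{e^{\alpha t}\nabla\bz_t}^2 + \nu\frac{d}{dt}\norm{e^{\alpha t}\nabla\bz}^2 &= 2\nu\alpha\norm{e^{\alpha t}\nabla\bz}^2 + 2(e^{\alpha t}\bF,e^{\alpha t}\bz_t) \\
&\quad - 2e^{2\alpha t}\big(b(\bz,\bz,\bz_t)+b(\bu^\infty,\bz,\bz_t)+b(\bz,\bu^\infty,\bz_t)\big).
\end{align*}

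The crux is to estimate the three trilinear terms and the forcing term against the dissipation $\norm{e^{\alpha t}\bz_t}^2$ \emph{without} ever invoking $\norm{\nabla\bz_t}$, since any bound routed through $\norm{\nabla\bz_t}$ would force a factor $\kappa^{-1}$ and destroy the uniformity in $\kappa$. For the forcing I would use Young's inequality, $2(e^{\alpha t}\bF,e^{\alpha t}\bz_t)\le \tfrac12\norm{e^{\alpha t}\bz_t}^2+2\norm{e^{\alpha t}\bF}^2$. For the two terms linear in $\bz$ I would write $b(\bu^\infty,\bz,\bz_t)=(\bu^\infty\cdot\nabla\bz,\bz_t)$ and $b(\bz,\bu^\infty,\bz_t)=(\bz\cdot\nabla\bu^\infty,\bz_t)$ and bound them using the a priori estimates on $\norm{\bu^\infty}_{L^\infty}$ and $\norm{\nabla\bu^\infty}_{L^4}$ from the remark of Section~2, obtaining $\le \tfrac14\norm{e^{\alpha t}\bz_t}^2+C\norm{e^{\alpha t}\nabla\bz}^2$ with $C=C(N,\nu,\lambda_1,\norm{{\bf f}^\infty}_{-1})$. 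For the genuinely nonlinear term I would apply \eqref{eqx2.3} to get $b(\bz,\bz,\bz_t)\le N\norm{\bz}^{1/2}\norm{\nabla\bz}\norm{\tilde\Delta\bz}^{1/2}\norm{\bz_t}$ and then Young's inequality twice (distributing the weight $e^{2\alpha t}$), yielding $\le \tfrac14\norm{e^{\alpha t}\bz_t}^2+\tfrac{\nu}{2}\norm{e^{\alpha t}\tilde\Delta\bz}^2+C\norm{\bz}^2\norm{\nabla\bz}^2\norm{e^{\alpha t}\nabla\bz}^2$. The term $\norm{e^{\alpha t}\tilde\Delta\bz}^2$ produced here is the price of the $\bz_t$ multiplier, and its time integral is precisely the quantity controlled by Lemma~\ref{lm2}, while the last summand is the nonlinear integrand appearing on the right-hand side of the claim.

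After absorbing the collected $\norm{e^{\alpha t}\bz_t}^2$ contributions, whose coefficients sum to $\le 1$, into the $2\norm{e^{\alpha t}\bz_t}^2$ on the left, I would multiply by $\tau^\beta(t)$ and write $\tau^\beta\frac{d}{dt}\norm{e^{\alpha t}\nabla\bz}^2=\frac{d}{dt}(\tau^\beta\norm{e^{\alpha t}\nabla\bz}^2)-\frac{d}{dt}(\tau^\beta)\norm{e^{\alpha t}\nabla\bz}^2$. As in \eqref{eqxxx2.3}--\eqref{eqxx2.3}, $\frac{d}{dt}\tau^\beta$ vanishes for $0<t<\bar t$ and for $t>\bar t$ is bounded by $\tfrac{\beta}{\bar t}\tau^\beta$, so the extra term is dominated by $C\tau^\beta\norm{e^{\alpha t}\nabla\bz}^2$. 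Integrating from $0$ to $t$ and multiplying by $e^{-2\delta_0 t}$ converts $\norm{e^{\alpha t}\nabla\bz}^2$ into $\norm{e^{\alpha_1 t}\nabla\bz}^2$ and leaves on the right three kinds of time integrals: $\int_0^t\tau^\beta e^{2\alpha s}\norm{\bF}^2$, $\int_0^t\tau^\beta e^{2\alpha s}\norm{\nabla\bz}^2$ (from the $2\nu\alpha$ term, the $\bu^\infty$ terms, and the $\tau^\beta$-derivative term), and $\int_0^t\tau^\beta e^{2\alpha s}\norm{\tilde\Delta\bz}^2$. The first already appears in the claim; the second is bounded by Lemma~\ref{lm1} and the third by Lemma~\ref{lm2}, whose right-hand sides are exactly the initial-data, forcing, and nonlinear expressions listed in the statement (this is where $\kappa\norm{\tilde\Delta\bz_0}^2$ enters, through Lemma~\ref{lm2}). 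This produces the displayed inequality.

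Finally, for \eqref{eqx3.7} and \eqref{eqx3.8} I would take $\limsup_{t\to\infty}$ and apply L'Hospital's rule as in the proof of Lemma~\ref{lm1}: the initial-data terms vanish because of the factor $e^{-2\delta_0 t}$, the forcing integral contributes $\tfrac{1}{2\delta_0}\limsup\tau^\beta e^{2\alpha_1 t}\norm{\bF}^2\le \tfrac{M}{2\delta_0}$, and the nonlinear integral contributes $\tfrac{1}{2\delta_0}\limsup \norm{\bz}^2\norm{\nabla\bz}^2\cdot\tau^\beta e^{2\alpha_1 t}\norm{\nabla\bz}^2$, which is zero since $\tau^\beta e^{2\alpha_1 t}\norm{\nabla\bz}^2$ is bounded by \eqref{eqx3.2} while $\norm{\nabla\bz}\to0$. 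The pointwise bound \eqref{eqx3.8} for $\norm{\bz_t}^2+2\kappa\norm{\nabla\bz_t}^2$ follows from the integral bound by the same argument used to pass from \eqref{eq2.6} to \eqref{eq2.7}. The main obstacle is the nonlinear estimate of the second paragraph: routing $b(\bz,\bz,\bz_t)$ through the $L^2$-norm of $\bz_t$ and the Stokes term $\norm{\tilde\Delta\bz}^2$ (controlled by Lemma~\ref{lm2}), rather than through $\norm{\nabla\bz_t}$, is precisely what keeps every constant independent of $\kappa$.
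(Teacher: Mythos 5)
Your proposal follows essentially the same route as the paper: test \eqref{eq1.13} with $e^{2\alpha t}\bz_t$ (your factor of $2$ is immaterial), absorb the forcing and the $\bu^\infty$-terms into $\norm{e^{\alpha t}\bz_t}^2$, route the nonlinear term $b(\bz,\bz,\bz_t)$ through $\norm{\bz}^{1/2}\norm{\nabla\bz}\norm{\tilde\Delta\bz}^{1/2}\norm{\bz_t}$ so that only $\norm{e^{\alpha t}\tilde\Delta\bz}^2$ and the cubic integrand survive, then multiply by $\tau^\beta$, integrate, weight by $e^{-2\delta_0 t}$, and close with Lemmas \ref{lm1} and \ref{lm2} plus L'Hospital's rule for the limsup statements. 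The only (harmless) deviations are cosmetic constants and your more explicit handling of the $\frac{d}{dt}\tau^\beta$ correction, which the paper's own proof of this lemma passes over silently.
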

\begin{proof}
 Set $\bphi=e^{2\alpha t}\bz_t$ in \eqref{eq1.13} to obtain
 \begin{align}
  \norm{e^{\alpha t}\bz_t}^2+\kappa\norm{e^{\alpha t}\nabla\bz_t}^2+\frac{\nu}{2}\frac{d}{dt}\norm{e^{\alpha t}\nabla\bz}^2&=\nu\alpha\norm{e^{\alpha t}\nabla\bz}^2+
  (e^{\alpha t}\bF,e^{\alpha t}\bz_t)-e^{\alpha t}b(\bz,\bz,e^{\alpha t}\bz_t)\notag\\
  &-b(\bu^\infty,e^{\alpha t}\bz,e^{\alpha t}\bz_t)-b(e^{\alpha t}\bz,\bu^\infty,e^{\alpha t}\bz_t).\label{eq2.20}
 \end{align}
The term $(e^{\alpha t}\bF,e^{\alpha t}\bz_t)$ is estimated as
\begin{align*}
(e^{\alpha t}\bF,e^{\alpha t}\bz_t)\leq\frac{3}{2}\norm{e^{\alpha t}\bF}^2+\frac{1}{6}\norm{e^{\alpha t}\bz_t}^2.
\end{align*}
The term $e^{\alpha t}b(\bz,\bz,e^{\alpha t}\bz_t)$ is bounded by
\begin{align*}
e^{\alpha t}b(\bz,\bz,e^{\alpha t}\bz_t)&\leq e^{\alpha t}\norm{\bz}_{L^4}\norm{\nabla\bz}_{L^4}\norm{e^{\alpha t}\bz_t}\\
&\leq Ce^{\alpha t}\norm{\bz}^{1/2}\norm{\nabla\bz}\norm{\tilde\Delta\bz}^{1/2}\norm{e^{\alpha t}\bz_t}\\
&\leq \frac{1}{6}\norm{e^{\alpha t}\bz_t}^2+\frac{3C^2}{2}\norm{\bz}\norm{\nabla\bz}\norm{e^{\alpha t}\nabla\bz}\norm{e^{\alpha t}\tilde\Delta\bz}\\
&\leq \frac{1}{6}\norm{e^{\alpha t}\bz_t}^2+C \norm{\bz}^2\norm{\nabla\bz}^2\norm{e^{\alpha t}\nabla\bz}^2+C\norm{e^{\alpha t}\tilde\Delta\bz}^2.
\end{align*}
For the term $b(\bu^\infty,e^{\alpha t}\bz,e^{\alpha t}\bz_t)+b(e^{\alpha t}\bz,\bu^\infty,e^{\alpha t}\bz_t)$, we note that
\begin{align*}
b(\bu^\infty,e^{\alpha t}\bz,e^{\alpha t}\bz_t)+b(e^{\alpha t}\bz,\bu^\infty,e^{\alpha t}\bz_t)&\leq 2N\frac{1}{\lambda_1^4}
\norm{\nabla\bu^\infty}^{1/2}\norm{\tilde\Delta\bu^\infty}^{1/2}\norm{e^{\alpha t}\nabla\bz}\norm{e^{\alpha t}\bz_t}\\
&\leq\frac{1}{6}\norm{e^{\alpha t}\bz_t}^2+6N^2\frac{1}{\lambda_1^2}\norm{\nabla\bu^\infty}\norm{\tilde\Delta\bu^\infty}\norm{e^{\alpha t}\nabla\bz}^2.
\end{align*}
On substitution, we arrive at from \eqref{eq2.20} that
 \begin{align*}
 \norm{e^{\alpha t}\bz_t}^2+&2\kappa \norm{e^{\alpha t}\nabla\bz_t}^2+\nu\frac{d}{dt}\norm{e^{\alpha t}\nabla\bz}^2\notag\\
 &\leq 3\norm{e^{\alpha t}\bF}^2
 +C(\norm{e^{\alpha t}\nabla\bz}^2+\norm{e^{\alpha t}\tilde\Delta\bz}^2)+ C\norm{\bz}^2\norm{\nabla\bz}^2\norm{e^{\alpha t}\nabla\bz}^2.
 \end{align*}
 Multiply the above inequality by $\tau^\beta(t)$ to arrive at
 \begin{align*}
  \nu\frac{d}{dt}(\tau^\beta(t)\norm{e^{\alpha t}\nabla\bz}^2)&+\tau^\beta(t)(\norm{e^{\alpha t}\bz_t}^2+2\kappa \norm{e^{\alpha t}\nabla\bz_t}^2)\\
  &\leq 3\tau^\beta(t)\norm{e^{\alpha t}\bF}^2+C\tau^\beta(t)(\norm{e^{\alpha t}\nabla\bz}^2+\norm{e^{\alpha t}\tilde\Delta\bz}^2)\\
  &+C\tau^\beta(t)\norm{\bz}^2\norm{\nabla\bz}^2\norm{e^{\alpha t}\nabla\bz}^2.
 \end{align*}
Integrate above inequality from $0$ to $t$ and then,  multiply the resulting inequality by $e^{-2\delta_0 t}$ to obtain
\begin{align}
\tau^\beta(t)\norm{e^{\alpha_1 t}\nabla\bz}^2+&e^{-2\delta_0 t}\int_{0}^{t}\tau^\beta(s)(\norm{e^{\alpha s}\bz_t(s)}^2+2\kappa\norm{e^{\alpha s}\nabla\bz_t(s)}^2) ds\notag\\
&\leq e^{-2\delta_0 t}\tau^\beta(0)\norm{\nabla\bz_0}^2+Ce^{-2\delta_0 t}\int_{0}^{t}\tau^\beta(s)(\norm{e^{\alpha s}\nabla\bz(s)}^2+\norm{e^{\alpha s}\tilde\Delta\bz(s)}^2) ds\notag\\
&\quad+ Ce^{-2\delta_0 t}\int_{0}^{t}\tau^\beta(s)\norm{\bz(s)}^2\norm{\nabla\bz(s)}^2\norm{e^{\alpha s}\nabla\bz(s)}^2 ds\label{eqx3.5}.
\end{align}
An application of Lemmas \ref{lm1} and \ref{lm2} in \eqref{eqx3.5} yields
\begin{align}
\tau^\beta(t)\norm{e^{\alpha_1 t}\nabla\bz}^2&+e^{-2\delta_0 t}\int_{0}^{t}(\norm{e^{\alpha s}\bz_t(s)}^2+2\kappa\norm{e^{\alpha s}\nabla\bz_t(s)}^2) ds\notag\\
&\leq C e^{-2\delta_0 t}(\norm{\bz_0}^2+\kappa\norm{\nabla\bz_0}^2+\kappa\norm{\tilde\Delta\bz_0}^2)+Ce^{-2\delta_0 t}\int_{0}^{t}\tau^\beta(s)\norm{e^{\alpha s}\bF}^2 ds\notag\\
&\quad+C e^{-2\delta_0 t}\int_{0}^{t}\tau^\beta(s)\norm{\bz(s)}^2\norm{\nabla\bz(s)}^2\norm{e^{\alpha s}\nabla\bz(s)}^2 ds\label{eqx3.6}. 
\end{align}
Now as $t\to\infty$, using L'Hospital's rule, we obtain from \eqref{eqx3.6} as in Lemmas \ref{lm1} and \ref{lm2}
\begin{equation*}
 \limsup_{t\to\infty} \tau^\beta(t)(\norm{e^{\alpha_1 t}\nabla\bz}^2)\leq CM\frac{1}{2\delta_0},
\end{equation*}
\begin{equation*}
 \limsup_{t\to\infty} e^{-2\delta_0 t}\int_{0}^{t}\tau^\beta(s)(\norm{e^{\alpha s}\bz_t(s)}^2+2\kappa\norm{e^{\alpha s}\nabla\bz_t(s)}^2) ds\leq CM\frac{1}{2\delta_0}.
\end{equation*}
Hence,
\begin{equation*}
\limsup_{t\to\infty}\tau^\beta(t)(\norm{e^{\alpha_1 t}\bz_t(t)}^2+2\kappa\norm{e^{\alpha_1 t}\nabla\bz_t(t)}^2)\leq CM.
\end{equation*}
This completes the rest of the proof.
\end{proof}
\begin{lemma}\label{lm4}
 Under the assumption $\bf{(A1)}$, let $\limsup_{t\to \infty}\tau^\beta(t)e^{2\alpha_1 t}(\norm{\bF(t)}^2+\norm{\bF_t}^2_{-1})\leq M_1$
and  $\bz_0\in {\bf H}^2\cap {\bf H}^1_0$. Then, there exists a positive 
 constant $C=C(N,\nu,\lambda_1,\norm{f^\infty}_{-1})$ such that
 \begin{align*}
 \tau^\beta(t)(\norm{e^{\alpha_1 t}\bz_t}^2&+\kappa\norm{e^{\alpha_1 t}\nabla\bz_t}^2)+\nu e^{-2\delta_0 t}\int_{0}^{t}\tau^\beta(s) \norm{e^{\alpha s}\nabla\bz_t(s)}^2 ds\\ 
 &\leq C e^{-2\delta_0 t}\tau^\beta(0)(\norm{\bF_0}^2+(1+\kappa)\norm{\tilde\Delta\bz_0}^2)+Ce^{-2\delta_0 t}\int_{0}^{t}\tau^\beta(s) \norm{e^{\alpha s}\bF_t(s)}^2_{-1} ds\\
 &\quad+Ce^{-2\delta_0 t}\int_{0}^{t}\norm{\nabla\bz(s)}^2\norm{e^{\alpha s}\bz_t(s)}^2 ds+Ce^{-2\delta_0 t}\int_{0}^{t}\norm{e^{\alpha s}\bz_t(s)}^2 ds.
 \end{align*}
Moreover,
\begin{align}
 \limsup_{t\to\infty} \tau^\beta(t)(\norm{e^{\alpha_1 t}\bz_t}^2+\kappa\norm{e^{\alpha_1 t}\nabla\bz_t}^2)\leq C(M_1)\frac{1}{2\delta_0}\label{eqx3.10},
\end{align}
and
\begin{equation}\label{eqx3.11}
\limsup_{t\to\infty} \tau^\beta(t)\norm{e^{\alpha_1 t}\nabla\bz_t(t)}^2\leq C\frac{(M_1)}{\nu}.
\end{equation} 
\end{lemma}
\begin{proof}
 Differentiate \eqref{eq1.13} with respect to $t$ and set $\bphi=e^{2\alpha t}\bz_t$ to obtain
 \begin{align}
  (\bz_{tt},e^{2\alpha t}\bz_t)+&\kappa(\nabla\bz_{tt},e^{2\alpha t}\nabla\bz_t)+\nu(\nabla\bz_t,e^{2\alpha t}\nabla\bz_t)\notag\\
  &=((\bz\cdot\nabla\bz)_t,-e^{2\alpha t}\bz_t)-e^{2\alpha t}b(\bu^\infty,\bz_t,\bz_t)-e^{2\alpha t}b(\bz_t,\bu^\infty,\bz_t)+(\bF_t,e^{2\alpha t}\bz_t)\label{eq2.31}.
 \end{align}
Now \eqref{eq2.31} can be written as 
\begin{align}
 \frac{d}{dt}(\norm{e^{\alpha t}\bz_t}^2+&\kappa\norm{e^{\alpha t}\nabla\bz_t}^2)+2\nu\norm{e^{\alpha t}\nabla\bz_t}^2\notag\\
 &=2\alpha(\norm{e^{\alpha t}\bz_t}^2+\kappa\norm{e^{\alpha t}\nabla\bz_t}^2)+2(e^{\alpha t}\bF_t,e^{\alpha t}\bz_t)-2b(e^{\alpha t}\bz_t,\bz,e^{\alpha t}\bz_t)-2b(\bz,e^{\alpha t}\bz_t,e^{\alpha t}\bz_t)\notag\\
 &-2(\bu^\infty,e^{\alpha t}\bz_t,e^{\alpha t}\bz_t)-2b(e^{\alpha t}\bz_t,\bu^\infty,e^{\alpha t}\bz_t).\label{eq2.32}
\end{align}
The right hand side terms of \eqref{eq2.32} are bounded by
\begin{align*}
 2\alpha(\norm{e^{\alpha t}\bz_t}^2+\norm{e^{\alpha t}\nabla\bz_t}^2)\leq\frac{2\alpha(1+\kappa\lambda_1)}{\lambda_1}\norm{e^{\alpha t}\nabla\bz_t}^2\leq\frac{\nu}{2}\norm{e^{\alpha t}\nabla\bz_t}^2,
\end{align*}
\begin{equation*}
 2(e^{\alpha t}\bF_t,e^{\alpha t}\bz_t)\leq \frac{\nu}{6}\norm{e^{\alpha t}\nabla\bz_t}^2+\frac{6}{\nu}\norm{e^{\alpha t}\bF_t}^2_{-1},
\end{equation*}
\begin{align*}
 2b(e^{\alpha t}\bz_t,\bz,e^{\alpha t}\bz_t)+2b(\bz\cdot e^{\alpha t}\nabla\bz_t,e^{\alpha t}\bz_t)=2b(e^{\alpha t}\bz_t\cdot\nabla\bz,e^{\alpha t}\bz_t)&\leq C\norm{\nabla\bz}\norm{e^{\alpha t}\bz_t}\norm{e^{\alpha t}\nabla\bz_t}\\
 &\leq \frac{\nu}{6}\norm{e^{\alpha t}\nabla\bz_t}^2+C(\nu)\norm{e^{\alpha t}\bz_t}^2\norm{\nabla\bz}^2,
\end{align*}
and 
\begin{align*}
 2b(\bu^\infty, e^{\alpha t}\bz_t,e^{\alpha t}\bz_t)+b(e^{\alpha t}\bz_t,\bu^\infty,e^{\alpha t}\bz_t)=2b(e^{\alpha t}\bz_t,\bu^\infty,e^{\alpha t}\bz_t)&\leq C\norm{e^{\alpha t}\bz_t}\norm{\nabla\bu^\infty}\norm{e^{\alpha t}\nabla\bz_t}\\
 &\leq \frac{\nu}{6}\norm{e^{\alpha t}\nabla\bz_t}^2+C\norm{e^{\alpha t}\bz_t}^2.
\end{align*}
Hence, we arrive at from \eqref{eq2.32}
\begin{align*}
\frac{d}{dt}(\norm{e^{\alpha t}\bz_t}^2+&\kappa\norm{e^{\alpha t}\nabla\bz_t}^2)+\nu\norm{e^{\alpha t}\nabla\bz_t}^2\leq \frac{6}{\nu}\norm{e^{\alpha t}\bF_t}^2_{-1}+C\norm{\nabla\bz}^2\norm{e^{\alpha t}\bz_t}^2+C\norm{e^{\alpha t}\bz_t}^2.
\end{align*}
Multiply the above inequality by $\tau^\beta(t)$ to obtain
\begin{align*}
 \frac{d}{dt}\Big(\tau^\beta(t)(\norm{e^{\alpha t}\bz_t}^2&+\kappa\norm{e^{\alpha t}\nabla\bz_t}^2)\Big)+\nu\tau^\beta(t)\norm{e^{\alpha t}\nabla\bz_t}^2\\
 &\leq \frac{6}{\nu}\tau^\beta(t)\norm{e^{\alpha t}\bF_t}^2_{-1}+C\tau^\beta(t)\norm{\nabla\bz(t)}^2\norm{e^{\alpha t}\bz_t}^2+C\tau^\beta(t)\norm{e^{\alpha t}\bz_t}^2\\
 &\quad+\frac{d}{dt}\big(\tau^\beta(t)\big)\Big(\frac{1}{\lambda_1}+\kappa\Big)\norm{e^{\alpha t}\nabla\bz_t}^2\\
 &\leq \frac{6}{\nu}\tau^\beta(t)\norm{e^{\alpha t}\bF_t}^2_{-1}+C\tau^\beta(t)\norm{\nabla\bz(t)}^2\norm{e^{\alpha t}\bz_t}^2+C\tau^\beta(t)\norm{e^{\alpha t}\bz_t}^2\\
 &\quad+\frac{\nu}{2}\tau^\beta(t)\norm{e^{\alpha t}\nabla\bz_t}^2.
\end{align*}
Rearrange the above inequality to 
integrate from $0$ to $t$ and then multiply the resulting inequality by $e^{-2\delta_0 t}$ to arrive at
\begin{align}
\tau^\beta(t)(\norm{e^{\alpha_1 t}\bz_t}^2+&\kappa\norm{e^{\alpha_1 t}\nabla\bz_t}^2)+\frac{\nu}{2} e^{-2\delta_0 t}\int_{0}^{t}\tau^\beta(s)\norm{e^{\alpha s}\nabla\bz_t(s)}^2 ds\notag\\
&\leq e^{-2\delta_0 t}\tau^\beta(0)(\norm{\bz_t(0)}^2+\norm{\nabla\bz_t(0)}^2)+\frac{6}{\nu}e^{-2\delta_0 t}\int_{0}^{t}\tau^\beta(s)\norm{e^{\alpha s}\bF_t}^2_{-1} ds\notag\\
&\quad+Ce^{-2\delta_0 t}\int_{0}^{t}\norm{\nabla\bz(s)}^2\norm{e^{\alpha s}\bz_t(s)}^2 ds+Ce^{-2\delta_0 t}\int_{0}^{t}\tau^\beta(s)\norm{e^{\alpha s}\bz_t(s)}^2 ds.\label{eq2.33}
\end{align}
Now from equation \eqref{eq1.13} after putting $\bphi=\bz_t$, it follows that
\begin{equation}\label{eq2.34}
 \norm{\bz_t}^2+\kappa\norm{\nabla\bz_t}^2\leq C(\nu,\lambda_1,\norm{f^\infty}_{-1})(\norm{\bF}^2+\norm{\tilde\Delta\bz}^2+\norm{\nabla\bz}^2\norm{\tilde\Delta\bz}^2).
\end{equation}
From \eqref{eq2.34}, we can find the estimate at $t=0$ that is $\norm{\bz_t(0)}^2+\kappa\norm{\nabla\bz_t(0)}^2$.
Use previous Lemma \ref{lm3} in \eqref{eq2.33} to obtain
\begin{align}
\tau^\beta(t)(\norm{e^{\alpha_1 t}\bz_t}^2+&\kappa\norm{e^{\alpha_1 t}\nabla\bz_t}^2)+\nu e^{-2\delta_0 t}\int_{0}^{t}\tau^\beta(s)\norm{e^{\alpha s}\nabla\bz_t(s)}^2 ds\notag\\
&\leq C e^{-2\delta_0 t}\tau^\beta(0)(\norm{\bF_0}^2+(1+\kappa)\norm{\tilde\Delta\bz_0}^2)+Ce^{-2\delta_0 t}\int_{0}^{t}\tau^\beta(s) \norm{e^{\alpha s}\bF_t(s)}^2_{-1} ds\notag\\
&+Ce^{-2\delta_0 t}\int_{0}^{t}\norm{\nabla\bz(s)}^2\norm{e^{\alpha s}\bz_t(s)}^2 ds+Ce^{-2\delta_0 t}\int_{0}^{t}\tau^\beta(s)\norm{e^{\alpha s}\bz_t(s)}^2 ds\label{eqx3.9}.
\end{align}
This completes the first part of the proof.
Use L'Hospital's rule to obtain from \eqref{eqx3.9}
\begin{align*}
 \limsup_{t\to\infty} \tau^\beta(t)(\norm{e^{\alpha_1 t}\bz_t}^2+\kappa\norm{e^{\alpha_1 t}\nabla\bz_t}^2)\leq C(M_1)\frac{1}{2\delta_0},
\end{align*}
and
\begin{equation*}
 \limsup_{t\to\infty} \nu e^{-2\delta_0 t}\int_{0}^{t}\tau^\beta(s)e^{2\alpha s}\norm{\nabla\bz_t(s)}^2 ds\leq C(M_1)\frac{1}{2\delta_0},
\end{equation*}
and hence,
\begin{equation*}
\limsup_{t\to\infty} \nu \tau^\beta(t)\norm{e^{\alpha_1 t}\nabla\bz_t(t)}^2\leq C(M_1).
\end{equation*}
This completes the rest of the proof.
\end{proof}
\begin{lemma}\label{lmx5}
Under the assumption $\bf{(A1)}$, let
$\limsup_{t\to\infty}\tau^\beta(t)e^{2\alpha_1 t}(\norm{\bF(t)}^2+\norm{\bF_t}^2_{-1})\leq M_1$
and
 $\limsup_{t\to\infty}\tau^\beta(t)e^{2\alpha_1 t}(\norm{\bF(t)}^2+\norm{\bF_t}^2)\leq M_2$
and $\bz_0\in {\bf H}^2\cap {\bf H}^1_0$. Then, there exists a positive 
 constant $C=C(N,\nu,\lambda_1,\norm{f^\infty}_{-1})$ such that for $\alpha_1=\alpha-\delta_0,\hspace{0.1cm} \delta_0>0$
 \begin{align*}
 \nu\tau^\beta(t)\norm{e^{\alpha_1 t}\tilde \Delta\bz}^2&+4e^{-2\delta_0 t}\int_{0}^{t}\tau^\beta(s)\big(\norm{e^{\alpha s}\nabla\bz_t(s)}^2
 +\kappa\norm{e^{\alpha s}\tilde \Delta\bz_t(s)}^2\big) ds\\
 &\leq C\tau^\beta(t)e^{2\alpha_1 t}\Big(\norm{\bF}^2+\norm{\bz}^2\norm{\nabla\bz}^4+\norm{\nabla\bz}^2\Big)\\
 &\qquad+C\tau^\beta(0)e^{-2\delta_0 t}(\norm{\bF_0}^2+\norm{\bz_0}^2_2+\norm{\bz_0}^2\norm{\nabla\bz_0}^4)\\
 &\quad+C e^{-2\delta_0 t}\int_{0}^{t}\tau^\beta(s)\big(\norm{\bz(s)}^2\norm{\nabla\bz(s)}^2\norm{e^{\alpha s}\nabla\bz(s)}^2
 +\norm{e^{\alpha s}\nabla\bz_t(s)}^2\norm{\nabla\bz}^2\big)\; ds\\
 &\quad +C e^{-2\delta_0 t}\int_{0}^{t}\tau^\beta(s)\big ( \|e^{\alpha s}\bF(s)\|^2 + \|e^{\alpha s}\bF_t(s)\|^2)\;ds.
 \end{align*}
Moreover,
\begin{align}
\limsup_{t\to\infty} \nu\tau^\beta(t)\norm{e^{\alpha_1 t}\tilde{\Delta}\bz}^2\leq \frac{C}{2\delta_0}\Big(M_1+M_2\Big)\label{eqx5.11},
\end{align}
and
\begin{align}
\limsup_{t\to\infty}\tau^\beta(t)\Big(\norm{e^{\alpha_1 t}\nabla\bz_t}^2+\kappa\norm{e^{\alpha_1 t}\tilde{\Delta}\bz_t}^2\Big)\leq C(M_1+M_2)\label{eqxx5.11}.
\end{align}
\end{lemma}
\begin{proof}
Rewrite \eqref{eq1.13} as
\begin{align}
\bz_t-\kappa\tilde{\Delta}\bz_t-\nu\tilde{\Delta}\bz+\bz\cdot\nabla\bz+\bu^\infty\cdot\nabla \bz+\bz\cdot\nabla \bu^\infty=\bF \label{eqx2.8}.
\end{align}
Form the $L^2$- inner product between \eqref{eqx2.8} and $-e^{2\alpha t}\tilde{\Delta}\bz_t$ to obtain 
\begin{align*}
\frac{\nu}{2}\frac{d}{dt}\big(\norm{e^{\alpha t}\tilde \Delta\bz}^2\big)+\big(\norm{e^{\alpha t}\nabla\bz_t}^2+\kappa\norm{e^{\alpha t}\tilde \Delta \bz_t}^2\big)&=e^{2\alpha t}(\bF,-\tilde \Delta\bz_t)+e^{2\alpha t}(\bz\cdot\nabla\bz,\tilde{\Delta }\bz_t)\\
&\quad+e^{2\alpha t}\big(\bu^\infty\cdot\nabla\bz+\bz\cdot\bu^\infty,\tilde \Delta\bz_t\big)+\nu\alpha \norm{e^{\alpha t}\tilde \Delta\bz}^2\\
&=\big(I_1+I_2+I_3\big)+\nu\alpha \norm{e^{\alpha t}\tilde \Delta\bz}^2.
\end{align*}
Multiply the above inequality by $2\tau^\beta(t)$ to arrive at
\begin{align}
\frac{d}{dt}\Big(\nu\tau^\beta(t)\norm{e^{\alpha t}\tilde \Delta\bz}^2\Big)&+2\tau^\beta(t)\big(\norm{e^{\alpha t}\nabla\bz_t}^2+\kappa\norm{e^{\alpha t}\tilde \Delta \bz_t}^2\big)\notag\\
&=2\tau^\beta(t)e^{2\alpha t}(\bF,-\tilde \Delta\bz_t)+2\tau^\beta(t)e^{2\alpha t}(\bz\cdot\nabla\bz,\tilde{\Delta }\bz_t)\notag\\
&\quad+2\tau^\beta(t)e^{2\alpha t}\big(\bu^\infty\cdot\nabla\bz+\bz\cdot\bu^\infty,\tilde \Delta\bz_t\big)+2\nu\big(\alpha\tau^\beta(t)+\frac{d}{dt}\tau^\beta(t)\big)\norm{e^{\alpha t}\tilde \Delta\bz}^2\notag\\
&\leq \big(I_1+I_2+I_3\big)+C\tau^\beta(t)\norm{e^{\alpha t}\tilde \Delta\bz}^2\label{eqx5.9}.
\end{align}
Integrate \eqref{eqx5.9} with respect to time from $0$ to $t$ and then, multiply the resulting equality by $e^{-2\delta_0 t}$ to obtain 
\begin{align}
\nu\tau^\beta(t)&\norm{e^{\alpha_1 t}\tilde \Delta\bz}^2+2e^{-2\delta_0 t}\int_{0}^{t}\tau^\beta(s)\big(\norm{e^{\alpha s}\nabla\bz_t(s)}^2+\kappa\norm{e^{\alpha s}\tilde \Delta\bz_t(s)}^2\big)\; ds\notag\\
&\leq \nu\tau^\beta(0)e^{-2\delta_0 t}\norm{\tilde \Delta\bz_0}^2+e^{-2\delta_0 t}\int_{0}^{t}\big(I_1+I_2+I_3\big)\; ds+C e^{-2\delta_0 t}\int_{0}^{t}\tau^\beta(s)\norm{e^{\alpha s}\tilde \Delta \bz}^2\; ds\label{eqx5.10}.
\end{align}
Since
\begin{align*}
I_1=2\tau^\beta(t) e^{2\alpha t}(F,-\tilde \Delta\bz_t)&=2\frac{d}{dt}\big(\tau^\beta(t)e^{2\alpha t}(\bF, -\tilde \Delta\bz)\big)
+4\alpha\tau^\beta(t) e^{2\alpha t}(\bF,\tilde \Delta\bz)+2\tau^\beta(t) e^{2\alpha t}(\bF_t,\tilde{\Delta}\bz)\\
&\qquad +2\frac{d}{dt}\big(\tau^\beta(t)\big)e^{2\alpha t}(F,\tilde \Delta\bz),
\end{align*}
therefore, it follows that
\begin{align*}
e^{-2\delta_0 t}\int_{0}^{t}I_1 ds&\leq 2\tau^\beta(t)e^{2\alpha_1 t}(\bF,-\tilde{\Delta}\bz)+2\tau^\beta(0)e^{-2\delta_0 t}(\bF_0,\tilde \Delta\bz_0)\\
&\quad+Ce^{-2\delta_0 t}\int_{0}^{t}\tau^\beta(s) e^{2\alpha s}\big((\bF,\tilde{\Delta}\bz)+(\bF_t,\tilde{\Delta}\bz)\big)\; ds\\
&\leq \frac{\nu}{6}\tau^\beta(t)\norm{e^{\alpha_1 t}\tilde\Delta\bz}^2+C\tau^\beta(t)\norm{e^{\alpha_1 t}\bF}^2+C\tau^\beta(0) e^{-2\delta_0 t}(\norm{\bF_0}^2
+\norm{\tilde \Delta\bz_0}^2)\\
&\quad+Ce^{-2\delta_0 t}\int_{0}^{t}\tau^\beta(s) \; (\norm{e^{\alpha s}\bF}^2+\norm{e^{\alpha s}\bF_t(s)}^2)+Ce^{-2\delta_0 t}\int_{0}^{t}\tau^\beta(s) \norm{e^{\alpha s}\tilde\Delta\bz}^2\; ds.
\end{align*}
The term $I_2$ is rewritten as
\begin{align*}
I_2&=2\tau^\beta(t)e^{2\alpha t}(\bz\cdot\nabla\bz,\tilde{\Delta}\bz_t)\\&=2\frac{d}{dt}\big(\tau^\beta(t)e^{2\alpha t}(\bz\cdot\nabla\bz,\tilde{\Delta}\bz)\big)-4\alpha\tau^\beta(t) e^{2\alpha t}(\bz\cdot\nabla\bz,\tilde{\Delta}\bz)-2\tau^\beta(t)e^{2\alpha t}(\bz_t\cdot\nabla\bz,\tilde{\Delta}\bz)\\
&\quad-2\tau^\beta(t)e^{2\alpha t}(\bz\cdot\nabla\bz_t,\tilde{\Delta}\bz)-2\frac{d}{dt}\big(\tau^\beta(t)\big)e^{2\alpha t}(\bz\cdot\nabla\bz,\tilde{\Delta}\bz).
\end{align*}
Hence, after bounding all the trilinear terms suitably, it follows that
\begin{align*}
e^{-2\delta_0 t}\int_{0}^{t}I_2(s) ds&\leq \frac{\nu}{6}\tau^\beta(t)\norm{e^{\alpha_1 t}\tilde{\Delta}\bz}^2+C\tau^\beta(t)\norm{\bz}^2\norm{\nabla\bz}^2\norm{e^{\alpha_1 t}\nabla\bz}^2+Ce^{-2\delta_0 t}\tau^\beta(0)\Big(\norm{\tilde{\Delta}\bz_0}^2\\
&\quad+\norm{\bz_0}^2\norm{\nabla\bz_0}^4\Big)+Ce^{-2\delta_0 t}\int_{0}^{t}\tau^\beta(s)\Big(\norm{e^{\alpha s}\tilde{\Delta}\bz}^2+\norm{\bz}^2\norm{\nabla\bz}^2\norm{e^{\alpha s}\nabla\bz}^2\\
&\qquad+\norm{e^{\alpha s}\nabla\bz_t(s)}^2\norm{\nabla\bz}^2\Big)\; ds.
\end{align*}
Similarly, 
\begin{align*}
e^{-2\delta_0 t}\int_{0}^{t}I_3(s) ds&=2\tau^\beta(t)e^{2\alpha_1 t}(\bu^\infty\cdot\nabla\bz+\bz\cdot\nabla\bu^\infty,\tilde{\Delta}\bz)-2\tau^\beta(0)e^{-2\delta_0 t}(\bu^\infty\cdot\nabla\bz_0+\bz_0\cdot\nabla\bu^\infty,\tilde{\Delta}\bz_0)\\
&\quad-4\alpha e^{-2\delta_0 t}\int_{0}^{t}\tau^\beta(s) e^{2\alpha s}(\bu^\infty\cdot\nabla\bz+\bz\cdot\nabla\bu^\infty,\tilde{\Delta}\bz)\; ds\\
&\quad-2e^{-2\delta_0 t}\int_{0}^{t}\tau^\beta(s)e^{2\alpha s}(\bu^\infty\cdot\nabla\bz_t+\bz_t\cdot\nabla\bu^\infty,\tilde{\Delta}\bz)\; ds\\
&\leq \frac{\nu}{6}\tau^\beta(t)\norm{e^{\alpha_1 t}\tilde{\Delta}\bz}^2+C\tau^\beta(t)\norm{\nabla\bu^\infty}^2\norm{e^{\alpha_1 t}\nabla\bz}^2+C\tau^\beta(0)e^{-2\delta_0 t}\norm{\bz_0}^2_2\\
&\quad+Ce^{-2\alpha t}\int_{0}^{t}\tau^\beta(s) \Big(\norm{e^{\alpha s}\nabla\bz}^2+\norm{e^{\alpha s}\nabla\bz_t(s)}^2 
\Big)\; ds.
\end{align*}
Hence from \eqref{eqx5.10}, we arrive at
\begin{align*}
\nu\tau^\beta(t)\norm{e^{\alpha_1 t}\tilde \Delta\bz}^2&+4e^{-2\delta_0 t}\int_{0}^{t}\tau^\beta(s)\big(\norm{e^{\alpha s}\nabla\bz_t(s)}^2
+\kappa\norm{e^{\alpha s}\tilde \Delta\bz_t(s)}^2\big) ds\\
&\leq C\tau^\beta(t)\Big(\norm{e^{\alpha_1 t}\bF}^2+\norm{\bz}^2\norm{\nabla\bz}^2\norm{e^{\alpha_1 t}\nabla\bz}^2+\norm{e^{\alpha_1 t}\nabla\bz}^2\Big)+Ce^{-2\delta_0 t}\tau^\beta(0)\Big(\norm{\bF_0}^2&\\
&\quad+\norm{\bz_0}^2_2+\norm{\bz_0}^2\norm{\nabla\bz_0}^4\Big)
+C e^{-2\delta_0 t}\int_{0}^{t}\tau^\beta(s)\Big(\norm{e^{\alpha s}\tilde{\Delta}\bz}^2
+\norm{e^{\alpha s}\nabla\bz}^2\\
&\quad +\norm{e^{\alpha s}\nabla\bz_t}^2+\norm{\bz}^2\norm{\nabla\bz}^2\norm{e^{\alpha s}\nabla\bz}^2
+\norm{e^{\alpha s}\nabla\bz_t(s)}^2\norm{\nabla\bz}^2\Big)\; ds\\
&\qquad+C e^{-2\delta_0 t}\int_{0}^{t}\tau^\beta(s)\big ( \|e^{\alpha s}\bF(s)\|^2 + \|e^{\alpha s}\bF_t(s)\|^2)\;ds.
\end{align*}
This completes the first part of the proof.\\
Again use L'Hospital's rule and approach as in previous Lemmas \ref{lm1}-\ref{lm4} to obtain
\begin{align*}
\limsup_{t\to\infty} \nu\tau^\beta(t)\norm{e^{\alpha_1 t}\tilde{\Delta}\bz}^2\leq \frac{C}{2\delta_0}\Big(M_1+M_2\Big),
\end{align*}
and
\begin{align*}
\limsup_{t\to\infty}\tau^\beta(t)\Big(\norm{e^{\alpha_1 t}\nabla\bz_t}^2+\kappa\norm{e^{\alpha_1 t}\tilde{\Delta}\bz_t}^2\Big)\leq C(M_1+M_2).
\end{align*}
Now 
\begin{align*}
 \kappa\norm{\tilde\Delta\bz_t}\leq \norm{\bz_t}+\nu\norm{\tilde\Delta\bz}+C\norm{\bz}^\frac{1}{2}\norm{\nabla\bz}\norm{\tilde\Delta\bz}^\frac{1}{2}+C\norm{\nabla\bz}+\norm{\bF}.
\end{align*}
Hence, it follows that
\begin{align*}
\norm{\kappa\tilde{\Delta}\bz_t}^2\leq C\Big(\norm{\bz_t}^2+\norm{\tilde{\Delta}\bz}^2+\norm{\bz}^2\norm{\nabla\bz}^4
+\norm{\nabla\bz}^2+\norm{\bF}^2\Big).
\end{align*}
A use of previous Lemmas and \eqref{eqx5.11} yields
\begin{align}
\limsup_{t\to\infty}\tau^\beta(t)\norm{e^{\alpha_1 t}\kappa\tilde{\Delta}\bz_t}^2\leq
C(M_1+M_2)\label{eqx5.12}.
\end{align}
This completes the rest of the proof.
\end{proof}
\begin{lemma}\label{lm5}
 Under the assumption $\bf{(A1)}$, let $\tau^\beta(t)e^{2\alpha_1 t}\norm{\bF(t)}^2\leq M$ $\forall t\geq 0$. Then, there exists a positive 
 constant $C=C(N,\nu,\lambda_1,\norm{f^\infty}_{-1})$ such that
 \begin{equation}\label{eqx5.1}
 \tau^\beta(t)\norm{e^{\alpha_1 t}\bz(t)}^2_{\bf {H}^1}\leq CM \quad \forall t,\beta\geq 0, 
 \end{equation}
 holds.
\end{lemma}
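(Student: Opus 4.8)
The plan is to reduce the full $\mathbf{H}^1$-estimate to an everywhere-in-time bound on the gradient. Since $\bz(t)\in {\bf J}_1\subset {\bf H}^1_0$, the Poincar\'e inequality \eqref{eqx1.1} gives $\norm{\bz}^2_{{\bf H}^1}=\norm{\bz}^2+\norm{\nabla\bz}^2\le (1+\lambda_1^{-1})\norm{\nabla\bz}^2$, so it suffices to establish $\tau^\beta(t)e^{2\alpha_1 t}\norm{\nabla\bz(t)}^2\le CM$ for every $t\ge 0$; the ${\bf L}^2$ part is in any case already controlled everywhere through Lemma \ref{lm1}. The natural starting point is the integrated inequality \eqref{eqx3.6} obtained in the proof of Lemma \ref{lm3}, whose left-hand side carries precisely the quantity $\tau^\beta(t)\norm{e^{\alpha_1 t}\nabla\bz}^2$. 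It therefore remains to bound the three terms on its right-hand side --- an initial-data term, a forcing integral, and a nonlinear integral --- uniformly in $t$ by a constant multiple of $M$.

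The strengthened hypothesis enters through the forcing integral. Writing $\alpha=\alpha_1+\delta_0$, the assumption gives $\tau^\beta(s)\norm{e^{\alpha s}\bF}^2=e^{2\delta_0 s}\big(\tau^\beta(s)e^{2\alpha_1 s}\norm{\bF(s)}^2\big)\le M\,e^{2\delta_0 s}$, whence
\[
e^{-2\delta_0 t}\int_0^t \tau^\beta(s)\norm{e^{\alpha s}\bF(s)}^2\,ds\le M\,e^{-2\delta_0 t}\int_0^t e^{2\delta_0 s}\,ds=M\,\frac{1-e^{-2\delta_0 t}}{2\delta_0}\le \frac{M}{2\delta_0}.
\]
Feeding this same pointwise control back into \eqref{eq2.4} upgrades the conclusions of Lemma \ref{lm1} to the everywhere bounds $\tau^\beta(s)e^{2\alpha_1 s}\norm{\bz(s)}^2\le CM$ and $e^{-2\delta_0 t}\int_0^t\tau^\beta(s)e^{2\alpha s}\norm{\nabla\bz(s)}^2\,ds\le CM$; the exponentially decaying initial-data term is trivially bounded uniformly in $t$. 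Both of these facts are needed to treat the last term.

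The main obstacle is the nonlinear integral $e^{-2\delta_0 t}\int_0^t \tau^\beta(s)\norm{\bz(s)}^2\norm{\nabla\bz(s)}^2\norm{e^{\alpha s}\nabla\bz(s)}^2\,ds$, which was merely allowed to vanish in the $\limsup$ argument of Lemma \ref{lm3} but must now be controlled for every finite $t$. The idea is to peel off the factor $\norm{\bz(s)}^2\norm{\nabla\bz(s)}^2$ and to recognise the remaining weighted factor as $\tau^\beta(s)e^{2\alpha s}\norm{\nabla\bz(s)}^2=e^{2\delta_0 s}G(s)$, where $G(s):=\tau^\beta(s)e^{2\alpha_1 s}\norm{\nabla\bz(s)}^2$ is exactly the quantity we are trying to bound. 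Since Lemma \ref{lm1} forces $\norm{\bz(s)}\to 0$ while \eqref{eqx3.7} keeps $\norm{\nabla\bz(s)}$ bounded, the coefficient $\norm{\bz(s)}^2\norm{\nabla\bz(s)}^2$ is bounded on $[0,\infty)$ and becomes smaller than any prescribed $\varepsilon$ beyond some time $T_0$. Splitting the integral at $T_0$ and using $e^{-2\delta_0 t}\int_{T_0}^t e^{2\delta_0 s}G(s)\,ds\le \tfrac{\varepsilon}{2\delta_0}\sup_{[0,t]}G$ allows the tail to be absorbed into the left-hand side, while the compact part $\int_0^{T_0}$ contributes only a fixed constant. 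Choosing $\varepsilon$ small enough that the absorption coefficient stays below one --- which is possible thanks to the decay guaranteed by the structural restriction \eqref{eqx1.4} on $\alpha$ --- and taking the supremum over $t$ closes a Gronwall-type bootstrap and yields $\sup_{t\ge 0}G(t)\le CM$. Combined with the Poincar\'e reduction of the first step, this gives $\tau^\beta(t)\norm{e^{\alpha_1 t}\bz(t)}^2_{{\bf H}^1}\le CM$ for all $t,\beta\ge 0$, as claimed.
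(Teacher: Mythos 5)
Your reduction via Poincar\'e, your treatment of the forcing integral, and your upgrade of Lemma \ref{lm1} to pointwise-in-time bounds under the strengthened hypothesis all match what the paper does. The difficulty is concentrated exactly where you place it, in the nonlinear integral, but your absorption argument does not close. Write $h(s)=\norm{\bz(s)}^2\norm{\nabla\bz(s)}^2$ and $G(s)=\tau^\beta(s)e^{2\alpha_1 s}\norm{\nabla\bz(s)}^2$. After splitting at $T_0$ you bound the tail by $\tfrac{\varepsilon}{2\delta_0}\sup_{[0,t]}G$, which is absorbable, but the piece $e^{-2\delta_0 t}\int_0^{T_0}h(s)e^{2\delta_0 s}G(s)\,ds$ is ``only a fixed constant'' only if you already know that $G$ is bounded on $[0,T_0]$, or at least that $\norm{\nabla\bz}\in L^4(0,T_0)$ --- and that is precisely the content of the lemma on compact time intervals. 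The references you invoke do not supply it: \eqref{eqx3.7} is a $\limsup$ statement, so it controls $\norm{\nabla\bz}$ only for sufficiently large times, and the bound $\kappa\norm{e^{\alpha_1 t}\nabla\bz}^2\le C$ from Lemma \ref{lm1} degenerates as $\kappa\to 0$, whereas the lemma's constant must be $\kappa$-uniform. If instead you estimate $G\le\sup_{[0,t]}G$ inside the compact part, its coefficient is of order $\norm{h}_{L^1(0,T_0)}$ for $t$ near $T_0$, which is not small, so no choice of $\varepsilon$ closes the bootstrap.

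The paper fills exactly this hole by a different device: integrating \eqref{eq5.3} gives $\int_0^t\norm{\bz(s)}^2\norm{\nabla\bz(s)}^2\,ds\le C(1+t)$ (equation \eqref{eq5.4}), and Gronwall's inequality applied to the estimate of Lemma \ref{lm2} (inequality \eqref{eqx3.33}) then yields a finite, $\kappa$-uniform bound for $\tau^\beta(t)\norm{e^{\alpha_1 t}\nabla\bz}^2$ on every interval $[0,T]$; the regime $t\ge T$ is covered separately by the $\limsup$ bound \eqref{eq5.6} coming from Lemma \ref{lm1}. Once you insert such a Gronwall step to justify boundedness of $G$ on $[0,T_0]$, your splitting-and-absorption argument becomes redundant: the two-regime structure (Gronwall on compact time, $\limsup$ for large time) already delivers \eqref{eqx5.1}.
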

\begin{proof}
 From Lemma \ref{lm1}, it follows that
 \begin{align}
  \tau^\beta(t)(\norm{e^{\alpha_1 t}\bz(t)}^2+\kappa\norm{e^{\alpha_1 t}\nabla\bz(t)}^2)&\leq \tau^\beta(0)e^{-2\delta_0 t}(\norm{\bz_0}^2+\kappa\norm{\nabla\bz_0}^2)+\frac{2}{\gamma_1\lambda_1}e^{-2\delta_0 t}\int_{0}^{t}\tau^\beta(s)e^{2\alpha s}\norm{\bF}^2 ds\notag\\
  &\leq \tau^\beta(0)e^{-2\delta_0 t}(\norm{\bz_0}^2+\kappa\norm{\nabla\bz_0}^2)+\frac{2}{\gamma_1\lambda_1}M\frac{1-e^{-2\delta_0 t}}{2\delta_0}\notag\\
  &\leq \tau^\beta(0)(\norm{\bz_0}^2+\kappa\norm{\nabla\bz_0}^2)+\frac{1}{\gamma_1\lambda_1\delta_0}M\label{eq5.2}.
 \end{align}
Also from Lemma \ref{lm1}, we find that
\begin{equation}\label{eq5.3}
\frac{d}{dt}(\norm{e^{\alpha_1 t}\bz(t)}^2+\kappa\norm{e^{\alpha_1 t}\nabla\bz(t)}^2)+\frac{\gamma_1}{2}\norm{e^{\alpha_1 t}\nabla\bz}^2\leq \frac{2}{\gamma_1\lambda_1}\norm{e^{\alpha_1 t}\bF}^2.
\end{equation}
Integrate \eqref{eq5.3} from $0$ to $t$ to obtain
\begin{align*}
 \frac{\gamma_1}{2}\int_{0}^{t}e^{2\alpha_1 s}\norm{\nabla\bz(s)}^2 ds&\leq (\norm{\bz_0}^2+\kappa\norm{\nabla\bz_0}^2)+\frac{2}{\gamma_1\lambda_1}\int_{0}^{t}e^{2\alpha_1 s}\norm{\bF}^2 ds\\
 &\leq (\norm{\bz_0}^2+\kappa\norm{\nabla\bz_0}^2)+\frac{2}{\gamma_1\lambda_1}Mt.
\end{align*}
Therefore, we arrive at
\begin{equation}\label{eq5.4}
 \int_{0}^{t}\norm{\bz(s)}^2\norm{\nabla\bz(s)}^2 ds\leq C(1+t)\quad \forall t\geq 0.
\end{equation}
Now from \eqref{eqx3.33} in Lemma \ref{lm2}, it follows that by using Gronwall's inequality
\begin{align}
\tau^\beta(t)(\norm{e^{\alpha_1 t}\nabla\bz}^2&+\kappa\norm{e^{\alpha_1 t}\tilde\Delta\bz}^2)\notag\\
 &\leq \Big(e^{-2\delta_0 t}\tau^\beta(0)\{\norm{\nabla\bz_0}^2+\kappa\norm{\tilde\Delta\bz_0}^2\}+C(\nu)e^{-2\delta_0 t}\int_{0}^{t}\tau^\beta(s)e^{2\alpha s}\norm{\bF}^2 ds\Big)\notag\\
 &\times\exp\{C(N,\nu,\lambda_1,\norm{f^\infty}_{-1})\int_{0}^{t}\norm{\bz(s)}^2\norm{\nabla\bz(s)}^2 ds\}\notag\\
 &\leq \Big(e^{-2\delta_0 t}\{\norm{\nabla\bz_0}^2+\kappa\norm{\tilde\Delta\bz_0}^2\}+C(\nu)M\frac{(1-e^{-2\delta_0 t})}{2\delta_0}\Big)\notag\\
  &\times\exp\{C(M)(1+t)\}\label{eq5.5}.
\end{align}
Now, \eqref{eq5.5} holds for finite $t, 0<t\leq T$, where $0<T<\infty$. For large $t>T$, we note from
 Lemma \ref{lm1}, that
\begin{align}
 \limsup_{t\to\infty}\tau^\beta(t)\norm{e^{\alpha_1 t}\nabla\bz}^2\leq C\limsup_{t\to\infty}\tau^\beta(t)\norm{e^{\alpha_1 t}\bF}^2\leq CM\label{eq5.6}.
\end{align}
Therefore from \eqref{eq5.6} and  for a finite time $T>0$ there holds
\begin{equation}\label{eq5.7}
 \tau^\beta(t)\norm{e^{\alpha_1 t}\nabla\bz(t)}^2\leq CM\quad \forall t\geq T.
\end{equation}
Hence, \eqref{eq5.7} with \eqref{eq5.5} and \eqref{eq5.2} complete the proof.
\end{proof}
\begin{lemma}\label{lm6}
Under the assumption $\bf{(A1)}$, let $\tau^\beta(t)e^{2\alpha_1 t}(\norm{\bF(t)}^2+\norm{\bF_t}^2_{-1})\leq M_1$  and 

$\tau^\beta(t)e^{2\alpha_1 t}(\norm{\bF(t)}^2+\norm{\bF_t}^2)\leq M_2$ $\forall t\geq 0$.
 Then, there exists a positive 
 constant $C=C(N,\nu,\lambda_1,\norm{f^\infty}_{-1})$ such that for $t\geq 0$
\begin{equation*}
 \tau^\beta(t)(\norm{e^{\alpha_1 t}\bz_t}^2+\kappa\norm{e^{\alpha_1 t}\nabla\bz_t}^2)\leq C \tau^\beta(0)(\norm{\bF_0}^2+\norm{\bz_0}^2+\kappa\norm{\nabla\bz_0}^2+\kappa\norm{\tilde\Delta\bz_0}^2)+C\frac{(M_1)}{2\delta_0},
\end{equation*}

 \begin{equation*}
 \nu\tau^\beta(t)\norm{e^{\alpha_1 t}\tilde\Delta\bz}^2\leq C\tau^\beta(0)e^{-2\delta_0 t}(\norm{\bz_0}^2+\kappa\norm{\nabla\bz_0}^2+\kappa\norm{\tilde\Delta\bz_0}^2)+C(M_1+M_2),
\end{equation*}
and
 \begin{equation*}
\tau^\beta(t)\norm{e^{\alpha_1 t}\kappa\tilde\Delta\bz_t}^2\leq  C\tau^\beta(0)e^{-2\delta_0 t}(\norm{\bz_0}^2+\kappa\norm{\nabla\bz_0}^2+\kappa\norm{\tilde\Delta\bz_0}^2)+C(M_1+M_2),
\end{equation*}
hold.
\end{lemma}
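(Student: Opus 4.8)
The plan is to prove the three estimates in the order listed, reusing as much of Lemmas~\ref{lm1}--\ref{lm5} as possible: the first bound is already contained in the proof of Lemma~\ref{lm4}, the second is the uniform-in-time upgrade of Lemma~\ref{lm2} obtained exactly as Lemma~\ref{lm5} upgrades the $\limsup$ bounds, and the third is read off algebraically from the momentum balance once the first two are in hand. For the first estimate I would simply invoke inequality \eqref{eqx3.9} established inside the proof of Lemma~\ref{lm4}. Under the present pointwise hypothesis $\tau^\beta(t)e^{2\alpha_1 t}(\norm{\bF(t)}^2+\norm{\bF_t}_{-1}^2)\le M_1$ for all $t\ge 0$, the elementary bounds $e^{-2\delta_0 t}\le 1$ and $(1-e^{-2\delta_0 t})/(2\delta_0)\le 1/(2\delta_0)$ convert \eqref{eqx3.9} into the stated uniform bound on $\tau^\beta(t)(\norm{e^{\alpha_1 t}\bz_t}^2+\kappa\norm{e^{\alpha_1 t}\nabla\bz_t}^2)$ with no additional argument.

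For the second estimate I would mirror the passage from the $\limsup$ bound \eqref{eqx3.2} to the pointwise bound \eqref{eqx5.1} carried out in Lemma~\ref{lm5}, but applied now to the $\tilde\Delta\bz$ energy inequality \eqref{eqx3.33} of Lemma~\ref{lm2}. The $\nu$-weighted dissipation term $\tfrac{\nu}{2}e^{-2\delta_0 t}\int_0^t\tau^\beta(s)\norm{e^{\alpha s}\tilde\Delta\bz}^2\,ds$ on the left of \eqref{eqx3.33} is precisely what will produce the factor $\nu$ in the conclusion. Feeding the integral control \eqref{eq5.4} on $\int_0^t\norm{\bz}^2\norm{\nabla\bz}^2\,ds$ into \eqref{eqx3.33} and applying Gronwall's inequality gives a finite-time bound on $[0,T]$ (as in \eqref{eq5.5}), while for $t>T$ the $\limsup$ estimate \eqref{eqx3.4} supplies a uniform bound; splicing the two ranges as in \eqref{eq5.5}--\eqref{eq5.7} yields $\nu\tau^\beta(t)\norm{e^{\alpha_1 t}\tilde\Delta\bz}^2\le C\tau^\beta(0)(\cdots)+C(M_1)$.

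Finally I would recover the $\kappa\tilde\Delta\bz_t$ bound directly from the equation. Applying to \eqref{eq1.7} the Leray projection $P$ used in the definition of $\tilde\Delta=P\Delta$ and rearranging gives the pointwise identity
\begin{equation*}
\kappa\tilde\Delta\bz_t=\bz_t-\nu\tilde\Delta\bz+P\big(\bz\cdot\nabla\bz+\bu^\infty\cdot\nabla\bz+\bz\cdot\nabla\bu^\infty\big)-P\bF .
\end{equation*}
Taking $\norm{\cdot}$ and squaring, I would control $\norm{\bz_t}$ by the first bound, $\nu\norm{\tilde\Delta\bz}$ by the second, $\norm{P\bF}\le\norm{\bF}$ by hypothesis, and the nonlinear terms by the a priori bounds on $\bu^\infty$ together with the uniform ${\bf H}^1$ bound \eqref{eqx5.1} of Lemma~\ref{lm5}; in particular $\norm{\bz\cdot\nabla\bz}\le C\norm{\bz}^{1/2}\norm{\nabla\bz}\norm{\tilde\Delta\bz}^{1/2}$ is again subsumed by the second bound. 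Multiplying through by $\tau^\beta(t)e^{2\alpha_1 t}$ then gives the third estimate.

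The hard part will be the second estimate, i.e.\ producing a $\kappa$-uniform \emph{pointwise} bound on $\norm{\tilde\Delta\bz}$ carrying the weight $\nu$ rather than $\kappa$. The parabolic energy method of Lemma~\ref{lm2} returns $\kappa\norm{\tilde\Delta\bz}^2$ only pointwise and $\nu\norm{\tilde\Delta\bz}^2$ only in integrated form, while the momentum identity above couples $\tilde\Delta\bz$ to $\kappa\tilde\Delta\bz_t$; since that identity is a single relation between two second-order quantities, the two $\tilde\Delta$-estimates cannot be closed purely algebraically. This is exactly why the order matters: the $\nu$-weighted pointwise bound must be extracted from \eqref{eqx3.33} by the finite-time Gronwall-plus-$\limsup$ splitting, and only afterwards can $\kappa\tilde\Delta\bz_t$ be read off, thereby breaking the apparent circularity.
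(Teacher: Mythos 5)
Your handling of the first and third estimates matches the paper: the first is indeed just \eqref{eqx3.9} from Lemma \ref{lm4} combined with $e^{-2\delta_0 t}\le 1$, and the third is read off from the momentum equation exactly as in the paper's display $\kappa\norm{\tilde\Delta\bz_t}\leq \norm{\bz_t}+\nu\norm{\tilde\Delta\bz}+\cdots+\norm{\bF}$.

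The gap is in your second estimate, and it is precisely at the point you flag as ``the hard part.'' Your proposed route (Gronwall on \eqref{eqx3.33} over $[0,T]$ plus the limsup \eqref{eqx3.4} for $t>T$) cannot deliver a pointwise bound on $\nu\tau^\beta(t)\norm{e^{\alpha_1 t}\tilde\Delta\bz}^2$ that is uniform in $\kappa$. On the finite interval, Gronwall applied to \eqref{eqx3.33} controls only $\kappa\norm{e^{\alpha_1 t}\tilde\Delta\bz}^2$ pointwise (the $\nu$-weighted quantity appears there only under the time integral), so the resulting bound on $\norm{\tilde\Delta\bz(t)}^2$ degenerates like $1/\kappa$ as $\kappa\to 0$ --- exactly the regime the paper insists on covering; and the limsup \eqref{eqx3.4} only constrains $t$ beyond some unquantified time. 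You correctly observe that the energy method ``returns $\kappa\norm{\tilde\Delta\bz}^2$ only pointwise and $\nu\norm{\tilde\Delta\bz}^2$ only in integrated form,'' but your splicing does not resolve this; it merely relocates the problem to $[0,T]$. A tell-tale sign is that your argument for the second estimate never uses the first estimate nor the hypothesis on $\norm{\bF_t}_{-1}$ --- if it worked, the pointwise $\nu$-weighted ${\bf H}^2$ bound would already follow under the hypotheses of Lemma \ref{lm5}, which is not how the paper is structured.

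What the paper actually does is test \eqref{eq5.8} with $-e^{2\alpha t}\tilde\Delta\bz$ to get \eqref{eq5.9}, keep $\nu\norm{e^{\alpha t}\tilde\Delta\bz}^2$ as the principal \emph{pointwise} term on the left, and move $(e^{\alpha t}\bz_t,e^{\alpha t}\tilde\Delta\bz)$ to the right, where Young's inequality turns it into a small multiple of $\nu\norm{e^{\alpha t}\tilde\Delta\bz}^2$ plus $\frac{2}{\nu}\norm{e^{\alpha t}\bz_t}^2$; the latter is then controlled by the pointwise $\bz_t$ bound \eqref{eqx5.8} just obtained from Lemma \ref{lm4}. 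In effect the second estimate is an elliptic-type inequality $\nu\norm{\tilde\Delta\bz}^2\lesssim \norm{\bz_t}^2+\kappa\norm{\tilde\Delta\bz}^2+\norm{\bF}^2+(\text{nonlinear, lower order})$, with the leftover $\kappa\norm{\tilde\Delta\bz}^2$ absorbed via Lemma \ref{lm2} and the lower-order terms via Lemma \ref{lm5}. This is why the order of the three estimates matters and why the $\bF_t$ hypothesis is needed for the second one. You should replace your Gronwall-plus-limsup argument for the second estimate by this step; the rest of your proposal then goes through.
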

\begin{proof}
From Lemma \ref{lm4}, we obtain
 \begin{align*}
 \tau^\beta(t)(\norm{e^{\alpha_1 t}\bz_t}^2&+\kappa\norm{e^{\alpha_1 t}\nabla\bz_t}^2)+\nu e^{-2\delta_0 t}\int_{0}^{t}\tau^\beta(s) \norm{e^{\alpha s}\nabla\bz_t(s)}^2 ds\\ 
 &\leq C e^{-2\delta_0 t}\tau^\beta(0)(\norm{\bF_0}^2+(1+\kappa)\norm{\tilde\Delta\bz_0}^2)+Ce^{-2\delta_0 t}\int_{0}^{t}\tau^\beta(s) \norm{e^{\alpha s}\bF_t(s)}^2_{-1} ds\\
 &\quad+Ce^{-2\delta_0 t}\int_{0}^{t}\norm{\nabla\bz(s)}^2\norm{e^{\alpha s}\bz_t(s)}^2 ds+Ce^{-2\delta_0 t}\int_{0}^{t}\norm{e^{\alpha s}\bz_t(s)}^2 ds.
 \end{align*}
 Use the Gr\"onwall's inequality and Lemma \ref{lm4} to proceed as in Lemma \ref{lm5} to arrive at
\begin{align}
 \tau^\beta(t)\big(\norm{e^{\alpha_1 t}\bz_t}^2+&\kappa\norm{e^{\alpha_1 t}\nabla\bz_t}^2\big)+\nu e^{-2\delta_0 t}\int_{0}^{t}\tau^\beta(s) \norm{e^{\alpha s}\nabla\bz_t(s)}^2 ds\notag\\
&\leq C e^{-2\delta_0 t}\tau^\beta(0)(\norm{\bF_0}^2+\norm{\bz_0}^2+\kappa\norm{\nabla\bz_0}^2+\kappa\norm{\tilde\Delta\bz_0}^2)\notag\\
&+CM_1\frac{(1-e^{-2\delta_0 t})}{2\delta_0}\notag\\
&\leq C e^{-2\delta_0 t}\tau^\beta(0)(\norm{\bF_0}^2+\norm{\bz_0}^2+\kappa\norm{\nabla\bz_0}^2+\kappa\norm{\tilde\Delta\bz_0}^2)+C\frac{(M_1)}{2\delta_0}\label{eqx5.8}.
\end{align}
From Lemma \ref{lmx5}, it follows that
\begin{align*}
 \nu\tau^\beta(t)\norm{e^{\alpha_1 t}\tilde \Delta\bz}^2&+4e^{-2\delta_0 t}\int_{0}^{t}\tau^\beta(s)e^{2\alpha s}\big(\norm{\nabla\bz_t(s)}^2
 +\kappa\norm{\tilde \Delta\bz_t(s)}^2\big) ds\\
 &\leq C\tau^\beta(t)e^{2\alpha_1 t}\Big(\norm{\bF}^2+\norm{\bz}^2\norm{\nabla\bz}^4+\norm{\nabla\bz}^2\Big)\\
 &\qquad+C\tau^\beta(0)e^{-2\delta_0 t}(\norm{\bF_0}^2+\norm{\bz_0}^2_2+\norm{\bz_0}^2\norm{\nabla\bz_0}^4)\\
 &\quad+C e^{-2\delta_0 t}\int_{0}^{t}\tau^\beta(s)e^{2\alpha s}\big(\norm{\bz(s)}^2\norm{\nabla\bz(s)}^4
 +\norm{\nabla\bz_t(s)}^2\norm{\nabla\bz}^2\big)\; ds\\
 &\quad +C e^{-2\delta_0 t}\int_{0}^{t}\tau^\beta(s)e^{2\alpha s}\big ( \|\bF(s)\|^2 + \|\bF_t(s)\|^2)\;ds.
 \end{align*}
A use of previous Lemmas \ref{lm5} and \ref{lm5} with \eqref{eqx5.8} implies
\begin{equation}\label{eq6.1}
 \nu\tau^\beta(t)\norm{e^{\alpha_1 t}\tilde\Delta\bz}^2\leq C\tau^\beta(0)e^{-2\delta_0 t}(\norm{\bz_0}^2+\kappa\norm{\nabla\bz_0}^2+\kappa\norm{\tilde\Delta\bz_0}^2)+C(M+M_1).
\end{equation}
Now 
\begin{align*}
 \kappa\norm{\tilde\Delta\bz_t}\leq \norm{\bz_t}+\nu\norm{\tilde\Delta\bz}+C\norm{\bz}^\frac{1}{2}\norm{\nabla\bz}\norm{\tilde{\Delta}\bz}+C\norm{\nabla\bz}+\norm{\bF}.
\end{align*}
Therefore, we arrive at 
\begin{align*}
 \tau^\beta(t)\norm{e^{\alpha_1 t}\kappa\tilde\Delta\bz_t}^2&\leq C\tau^\beta(t)\norm{e^{\alpha_1 t}\bz_t}^2+C\tau^\beta(t)\norm{e^{\alpha_1 t}\tilde\Delta\bz}^2+
 C\tau^\beta(t)\norm{\bz}^2\norm{\nabla\bz}^2\norm{e^{\alpha_1 t}\nabla\bz}^2\\
 &+C\tau^\beta(t)\norm{e^{\alpha_1 t}\nabla\bz}^2+C\tau^\beta(t)\norm{e^{\alpha_1 t}\bF}^2.
\end{align*}
A use of previous Lemmas and \eqref{eq6.1} yields
\begin{equation}\label{eq6.2}
\tau^\beta(t)\norm{e^{\alpha_1 t}\kappa\tilde\Delta\bz_t}^2\leq  C\tau^\beta(0)e^{-2\delta_0 t}(\norm{\bz_0}^2+\kappa\norm{\nabla\bz_0}^2+\kappa\norm{\tilde\Delta\bz_0}^2)+C(M_1+M_2).
\end{equation}
This completes the rest of the proof.
\end{proof}
\begin{lemma}\label{lm7}
Under the assumption $\bf {(A1)}$, let $\tau^\beta(t)e^{2\alpha_1 t}(\norm{\bF(t)}^2+\norm{\bF_t}^2_{-1})\leq M_1$ and 

 $\tau^\beta(t)e^{2\alpha_1 t}(\norm{\bF(t)}^2+\norm{\bF_t}^2)\leq M_2$
 $\forall t\geq 0$. Then, there exists a positive 
 constant $C=C(N,\nu,\lambda_1,\norm{f^\infty}_{-1})$ such that
\begin{equation}
 \tau^\beta(t)\norm{e^{\alpha_1 t}q(t)}^2_{H^1(\Omega)/\mathbb{R}}\leq C \tau^\beta(0)e^{-2\delta_0 t}(\norm{\bz_0}^2+\kappa\norm{\nabla\bz_0}^2+\kappa\norm{\tilde\Delta\bz_0}^2)+C(M_1+M_2).
\end{equation}
\end{lemma}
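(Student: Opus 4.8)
The plan is to recover the pressure from the momentum balance and to control it through the regularity of the stationary Stokes operator rather than through the inf--sup inequality \eqref{eqx1.14} alone: the latter delivers only an $L^2$ bound on $q$, whereas the statement asks for the $H^1(\Omega)/\mathbb R$ norm, which by the Poincar\'e--Wirtinger inequality on the quotient space is equivalent to $\norm{\nabla q}$. First I would rewrite \eqref{eq1.7} by grouping the two dissipative terms,
\begin{equation*}
-\Delta(\nu\bz+\kappa\bz_t)+\nabla q=\bF-\bz_t-\bz\cdot\nabla\bz-\bu^\infty\cdot\nabla\bz-\bz\cdot\nabla\bu^\infty=:\mathbf{g}.
\end{equation*}
Since $\bz$ and $\bz_t$ both lie in ${\bf J}_1$, the field $\bw:=\nu\bz+\kappa\bz_t$ is divergence free and vanishes on $\partial\Omega$, so $(\bw,q)$ solves a stationary Stokes problem with right--hand side $\mathbf{g}$. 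On the convex polygonal/polyhedral domain $\Omega$ the Stokes operator is $H^2\times(H^1/\mathbb R)$--regular, whence $\norm{q}_{H^1/\mathbb R}\le C\norm{\mathbf{g}}$, reducing the whole estimate to an $L^2$ bound of $\mathbf{g}$.

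The next step is to bound $\norm{\mathbf{g}}$ termwise. The linear parts are immediate: $\norm{\bF}$ is controlled by hypothesis and $\norm{\bz_t}$ by Lemma \ref{lm6}. For the nonlinearities I would use the interpolation/Ladyzhenskaya inequality behind \eqref{eqx2.3} to get $\norm{\bz\cdot\nabla\bz}\le C\norm{\bz}^{1/2}\norm{\nabla\bz}\norm{\tilde\Delta\bz}^{1/2}$, while the two mixed terms are handled through the uniform steady--state bounds established in Section~2, namely $\norm{\bu^\infty\cdot\nabla\bz}\le\norm{\bu^\infty}_{L^\infty}\norm{\nabla\bz}$ and $\norm{\bz\cdot\nabla\bu^\infty}\le\norm{\bz}_{L^4}\norm{\nabla\bu^\infty}_{L^4}\le C\norm{\bz}^{1/2}\norm{\nabla\bz}^{1/2}$, since $\norm{\bu^\infty}_{L^\infty}$ and $\norm{\nabla\bu^\infty}_{L^4}$ are bounded. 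Squaring yields $\norm{q}_{H^1/\mathbb R}^2\le C\big(\norm{\bF}^2+\norm{\bz_t}^2+\norm{\bz}\,\norm{\nabla\bz}^2\,\norm{\tilde\Delta\bz}+\norm{\bz}\,\norm{\nabla\bz}\big)$.

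Finally I would multiply by the weight $\tau^\beta(t)e^{2\alpha_1 t}$ and absorb each resulting term into the bounds already in hand. The forcing term becomes $\tau^\beta(t) e^{2\alpha_1 t}\norm{\bF}^2\le M_1$; the term $\tau^\beta(t) e^{2\alpha_1 t}\norm{\bz_t}^2$ is controlled by the first estimate of Lemma \ref{lm6}; and the nonlinear contributions, after distributing the exponential weight as $e^{2\alpha_1 t}=e^{\alpha_1 t}e^{\alpha_1 t}$ and using that $\norm{\bz}$ and $\norm{\nabla\bz}$ are uniformly bounded (Lemma \ref{lm5}), are dominated by products of the uniformly bounded quantities $\tau^\beta(t)\norm{e^{\alpha_1 t}\nabla\bz}^2$ (Lemma \ref{lm5}) and $\tau^\beta(t)\norm{e^{\alpha_1 t}\tilde\Delta\bz}^2$ (Lemma \ref{lm6}), via Young's inequality. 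Collecting these gives the asserted inequality, with the exponentially decaying data term $C\tau^\beta(0)e^{-2\delta_0 t}(\norm{\bz_0}^2+\kappa\norm{\nabla\bz_0}^2+\kappa\norm{\tilde\Delta\bz_0}^2)$ plus $C(M_1)$.

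I expect the main obstacle to be the first step, the passage to the $H^1/\mathbb R$ pressure bound: the inf--sup condition \eqref{eqx1.14} as stated only yields $\norm{q}$ in $L^2/\mathbb R$, so the genuine input is the elliptic (Stokes) regularity on the convex polyhedral domain applied to the combined unknown $\nu\bz+\kappa\bz_t$. One must also verify that this grouping keeps $\mathbf{g}$ in $L^2$ \emph{uniformly in} $\kappa$ as $\kappa\to 0$, which is precisely what the $\kappa$--uniform estimates of Lemmas \ref{lm5}--\ref{lm6} (in particular the bound on $\kappa\norm{\tilde\Delta\bz_t}$) guarantee.
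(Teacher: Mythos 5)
Your proposal is correct, and it reaches the estimate by a genuinely different route at the key step. The paper does not invoke Stokes elliptic regularity for the pair $(\nu\bz+\kappa\bz_t,\,q)$: instead it reads $\nabla q$ off the momentum equation and bounds $\norm{\nabla q}$ directly by duality against ${\bf L}^2$ test functions, $\norm{\nabla q}\le \norm{\bz_t}+\kappa\norm{\tilde\Delta\bz_t}+\nu\norm{\tilde\Delta\bz}+\dots$, then recovers the $L^2/\mathbb{R}$ part of $q$ from the inf--sup inequality \eqref{eqx1.14}; this forces the paper to first prove the separate bound on $\tau^\beta(t)e^{2\alpha_1 t}\norm{\kappa\tilde\Delta\bz_t}^2$ (the third estimate of Lemma \ref{lm6}). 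Your grouping $-\Delta(\nu\bz+\kappa\bz_t)+\nabla q=\mathbf{g}$ with $\mathbf{g}=\bF-\bz_t-\bz\cdot\nabla\bz-\bu^\infty\cdot\nabla\bz-\bz\cdot\nabla\bu^\infty$ delivers the full $H^1/\mathbb{R}$ bound in one stroke from the $H^2\times H^1/\mathbb{R}$ regularity of the stationary Stokes problem (valid on the convex polygonal/polyhedral $\Omega$ assumed here), and as a by-product makes the separate $\kappa\tilde\Delta\bz_t$ estimate unnecessary, since that term sits on the left inside $-\Delta\bw$. The price is reliance on a nontrivial elliptic regularity theorem where the paper uses only duality and \eqref{eqx1.14}. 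Your termwise bounds on $\mathbf{g}$ and the final weighting argument coincide with the paper's, and the data term $C\tau^\beta(0)e^{-2\delta_0 t}(\norm{\bz_0}^2+\kappa\norm{\nabla\bz_0}^2+\kappa\norm{\tilde\Delta\bz_0}^2)$ enters exactly as it does there, through Lemmas \ref{lm5} and \ref{lm6}. One small correction to your closing remark: uniformity in $\kappa$ of $\norm{\mathbf{g}}$ does not rest on the bound for $\kappa\norm{\tilde\Delta\bz_t}$ (that term does not appear in $\mathbf{g}$ at all); it rests on the $\kappa$-uniform control of $\norm{\bz_t}$ from the first estimate of Lemma \ref{lm6}. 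This does not affect the validity of your argument.
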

\begin{proof}
Use property of the divergence free space ${\bf J}_1$ to obtain for $\bphi\in {\bf H}^1_0$
 \begin{align*}
  (\nabla q,\bphi)=(\bz_t-\kappa\tilde\Delta\bz_t-\nu\tilde\Delta\bz+\bz\cdot\nabla\bz+\bu^\infty\cdot\nabla\bz+\bz\cdot\nabla\bu^\infty-\bF,\bphi).
 \end{align*}
Hence, it follows that
\begin{align*}
 |(\nabla q,\bphi)|&\leq \norm{\bz_t}\norm{\bphi}+\kappa\norm{\tilde\Delta\bz_t}\norm{\bphi}+\nu\norm{\tilde\Delta\bz}\norm{\bphi}+C\norm{\bz}^{1/2}\norm{\nabla\bz}\norm{\tilde\Delta\bz}^{1/2}\norm{\bphi}\\
 &+C(\nu,\lambda_1,\norm{f^\infty}_{-1})\norm{\nabla\bz}\norm{\bphi}+\norm{\bF}\norm{\phi},
\end{align*}
and
\begin{align}
\tau^\delta(t)\norm{e^{\alpha_1 t}\nabla q}\leq \frac{|(\tau^\beta(t)e^{\alpha_1 t}\nabla q,\bphi)|}{\norm{\bphi}}&\leq C\tau^\delta(t)\Big(\norm{e^{\alpha_1 t}\bz_t}+\kappa\norm{e^{\alpha_1 t}\tilde\Delta\bz_t}+\norm{e^{\alpha_1 t}\tilde\Delta\bz}\notag\\
&+e^{\alpha_1 t}\norm{\bz}^{1/2}\norm{\nabla\bz}\norm{\tilde\Delta\bz}^{1/2}+\norm{e^{\alpha_1 t}\nabla\bz}+\norm{e^{\alpha_1 t}\bF}\Big)\label{eq6.10}.
\end{align}
Hence, squaring both sides, the above inequality can be rewritten as
 \begin{align}
  \tau^\beta(t)\norm{e^{\alpha_1 t}\nabla q}^2&\leq C \Big(\tau^\beta(t)\norm{e^{\alpha_1 t}\bz_t}^2+\tau^\beta(t)\norm{\kappa e^{\alpha_1 t }\tilde\Delta\bz_t}^2+\tau^\beta(t)\norm{e^{\alpha_1 t}\tilde\Delta\bz}^2\notag\\
  &\quad+\tau^\beta(t)\norm{\bz}^2\norm{\nabla\bz}^2\norm{e^{\alpha_1 t}\nabla\bz}^2+\tau^\beta(t)\norm{e^{\alpha_1 t}\nabla\bz}^2+\tau^\beta(t)\norm{e^{\alpha_1 t}\bF}^2\Big)\label{eqx6.6}.
 \end{align}
Also from \eqref{eqx1.14}, it follows that
\begin{equation}\label{eqxx1.14}
 c\tau^\delta(t)\norm{e^{\alpha_1 t}q(t)}\leq \sup_{v\in {\bf H}^1_0}\frac{(\bv,\tau^\delta(t)e^{\alpha_1 t}\nabla q(t))}{\norm{\nabla \bv}}\leq \dfrac{1}{\sqrt{\lambda_1}}\tau^\delta(t)\norm{e^{\alpha_1 t}\nabla q(t)}.
\end{equation}
Therefore, using previous Lemmas \ref{lm5} and \ref{lm6} we obtain from \eqref{eqx6.6}
\begin{align}
\tau^\beta(t)\norm{e^{\alpha_1 t} q}^2_{H^1/\mathbb{R}}\leq \tau^\beta(0)e^{-2\delta_0 t}(\norm{\bz_0}^2+\kappa\norm{\nabla\bz_0}^2+\kappa\norm{\tilde\Delta\bz_0}^2)+C(M_1+M_2).
\end{align}
This concludes the proof.
\end{proof}
Below, we prove one of the main theorem of this paper
\begin{theorem}\label{thm1}
  Under the assumption $\bf{(A1)}$, let $\limsup_{t\to\infty}\tau^\beta(t)e^{2\alpha_1 t}\norm{\bF(t)}^2\leq M$,
  and $\bz_0\in {\bf H}^2\cap {\bf H}^1_0$. Then, there exists a positive 
 constant $C=C(N,\nu,\lambda_1,\norm{f^\infty}_{-1})$ such that
 \begin{align}
  \limsup_{t\to\infty} \tau^\beta(t)\Big(\norm{e^{\alpha_1 t}\bz(t)}^2_{{\bf H}^2}+(\norm{e^{\alpha_1 t}\bz_t(t)}^2+\kappa\norm{e^{\alpha_1 t}\nabla\bz_t(t)}^2)+\norm{e^{\alpha_1 t}q(t)}^2_{H^1(\Omega)/\mathbb{R}}\Big)\leq C(M).
 \end{align} 
\end{theorem}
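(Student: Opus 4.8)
The plan is to obtain the theorem by collecting the $\limsup$ estimates already derived in Lemmas \ref{lm1}, \ref{lm2}, \ref{lm3} and \ref{lm7}, splitting the left-hand side into its three natural groups and bounding each separately. Since the $\limsup$ of a finite sum is at most the sum of the individual $\limsup$'s, it suffices to control $\tau^\beta(t)e^{2\alpha_1 t}\norm{\bz}_{{\bf H}^2}^2$, then $\tau^\beta(t)e^{2\alpha_1 t}(\norm{\bz_t}^2+\kappa\norm{\nabla\bz_t}^2)$, and finally $\tau^\beta(t)e^{2\alpha_1 t}\norm{q}_{H^1/\mathbb{R}}^2$, one at a time.

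First, for the velocity in ${\bf H}^2$. Since $-\tilde\Delta$ is the Stokes operator on the convex domain $\Omega$, elliptic regularity gives the norm equivalence $\norm{\bz}_{{\bf H}^2}\le C\norm{\tilde\Delta\bz}$ for $\bz\in {\bf J}_1\cap {\bf H}^2$; together with the Poincar\`e inequalities \eqref{eqx1.1}--\eqref{eqx1.2} this lets me dominate $\norm{\bz}_{{\bf H}^2}^2$ by a constant multiple of $\norm{\bz}^2+\norm{\nabla\bz}^2+\norm{\tilde\Delta\bz}^2$. I would then feed in the estimates \eqref{eqx3.1} and \eqref{eqx3.2} from Lemma \ref{lm1} and, decisively, the second-order estimate \eqref{eqx3.4} from Lemma \ref{lm2}, which bounds $\limsup_{t\to\infty}\tau^\beta(t)e^{2\alpha_1 t}\norm{\tilde\Delta\bz(t)}^2$ by $C(M)$. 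Summing these three $\limsup$ bounds yields $\limsup_{t\to\infty}\tau^\beta(t)e^{2\alpha_1 t}\norm{\bz(t)}_{{\bf H}^2}^2\le C(M)$.

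Next, the time-derivative terms are already packaged: estimate \eqref{eqx3.8} of Lemma \ref{lm3} gives precisely $\limsup_{t\to\infty}\tau^\beta(t)e^{2\alpha_1 t}(\norm{\bz_t}^2+2\kappa\norm{\nabla\bz_t}^2)\le C(M)$, which, up to the harmless factor $2$ on the $\kappa$-term, is exactly the second group. For the pressure I invoke Lemma \ref{lm7}, whose conclusion controls $\tau^\beta(t)e^{2\alpha_1 t}\norm{q(t)}_{H^1/\mathbb{R}}^2$; I note that Lemma \ref{lm7} (via Lemma \ref{lm6}) uses the sharper data bound involving $\norm{\bF_t}_{-1}$, so this contribution is bounded by $C(M_1)$, and the constant $C(M)$ in the statement is to be read as absorbing that dependence. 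Adding the three groups completes the proof.

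The argument carries no serious analytic obstacle of its own, since all the real work was done in the lemmas. The one genuinely new ingredient is the Stokes elliptic-regularity estimate $\norm{\bz}_{{\bf H}^2}\le C\norm{\tilde\Delta\bz}$, which upgrades the $\tilde\Delta$-bound of Lemma \ref{lm2} to a full ${\bf H}^2$-bound. The only point that requires care is the bookkeeping of hypotheses: the ${\bf H}^2$-velocity and the $\bz_t$ estimates hold under the single assumption $\limsup_{t\to\infty}\tau^\beta(t)e^{2\alpha_1 t}\norm{\bF(t)}^2\le M$, whereas the pressure estimate additionally requires the $\bF_t$-bound furnished to Lemmas \ref{lm6}--\ref{lm7}, so strictly the conclusion depends on $M_1$ as well as on $M$.
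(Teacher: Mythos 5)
Your treatment of the velocity and time-derivative groups matches the paper: the ${\bf H}^2$-bound comes from \eqref{eqx3.1}, \eqref{eqx3.2} and \eqref{eqx3.4} together with Stokes elliptic regularity, and the $\bz_t$-group is exactly \eqref{eqx3.8} of Lemma \ref{lm3}. The gap is in the pressure term. You route it through Lemma \ref{lm7}, which (via Lemmas \ref{lm4} and \ref{lm6}) requires the hypothesis $\tau^\beta(t)e^{2\alpha_1 t}(\norm{\bF(t)}^2+\norm{\bF_t}^2_{-1})\leq M_1$. Theorem \ref{thm1} does not assume any control on $\bF_t$, and you cannot repair this by declaring that ``$C(M)$ absorbs the $M_1$-dependence'': that silently strengthens the hypotheses and proves a different, weaker theorem. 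The only place the $\bF_t$-bound enters the pressure estimate is through the term $\kappa\tilde\Delta\bz_t$, and the paper's proof shows how to control that term without it.

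Concretely, the paper takes the pressure inequality \eqref{eq6.10}/\eqref{eqxx6.6}, which bounds $\tau^\beta(t)\norm{e^{\alpha_1 t}\nabla q}^2$ by $\norm{\bz_t}^2$, $\norm{\kappa\tilde\Delta\bz_t}^2$, $\norm{\tilde\Delta\bz}^2$, $\norm{\nabla\bz}^2$ and $\norm{\bF}^2$ (all weighted), and then, instead of invoking Lemma \ref{lm6}, bounds $\kappa\tilde\Delta\bz_t$ directly from the equation \eqref{eq1.13}: solving for $\kappa\tilde\Delta\bz_t$ gives
$\kappa\norm{\tilde\Delta\bz_t}\leq \norm{\bz_t}+\nu\norm{\tilde\Delta\bz}+\norm{\bz}\norm{\nabla\bz}+\norm{\bu^\infty}\norm{\nabla\bz}+\norm{\bz}\norm{\nabla\bu^\infty}+\norm{\bF}$,
and every quantity on the right has its $\limsup$ (after weighting by $\tau^\beta e^{2\alpha_1 t}$) bounded by $CM$ using only Lemmas \ref{lm1}, \ref{lm2} and \ref{lm3}, i.e., using only the hypothesis $\limsup_{t\to\infty}\tau^\beta(t)e^{2\alpha_1 t}\norm{\bF(t)}^2\leq M$; this is the step \eqref{eqxxx6.7}. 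Substituting back yields the pressure bound $C(M)$ with no reference to $\bF_t$. Replace your appeal to Lemma \ref{lm7} with this equation-based bound on $\kappa\tilde\Delta\bz_t$ and your argument closes under the stated hypotheses.
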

\begin{proof}
From equation \eqref{eq6.10}, we obtain
 \begin{align}
  \tau^\beta(t)\norm{e^{\alpha_1 t}\nabla q}^2&\leq C \Big(\tau^\beta(t)\norm{e^{\alpha_1 t}\bz_t}^2+\tau^\beta(t)\norm{\kappa e^{\alpha_1 t }\tilde\Delta\bz_t}^2+\tau^\beta(t)\norm{e^{\alpha_1 t}\tilde\Delta\bz}^2\notag\\
  &+\tau^\beta(t)\norm{\bz}^2\norm{\nabla\bz}^2\norm{e^{\alpha_1 t}\nabla\bz}^2+\tau^\beta(t)\norm{e^{\alpha_1 t}\nabla\bz}^2+\tau^\beta(t)\norm{e^{\alpha_1 t}\bF}^2\Big)\label{eqxx6.6}.
 \end{align}
Also from \eqref{eqx1.14}, it follows that
\begin{equation*}
 c\tau^\delta(t)\norm{e^{\alpha_1 t}q(t)}\leq \sup_{\bv\in {\bf H}^1_0}\frac{(\bv,\tau^\delta(t)e^{\alpha_1 t}\nabla q(t))}{\norm{\nabla \bv}}\leq \dfrac{1}{\sqrt{\lambda_1}}\tau^\delta(t)\norm{e^{\alpha_1 t}\nabla q(t)}.
\end{equation*}
Hence, we arrive at from \eqref{eqxx6.6}
\begin{align}
 \limsup_{t\to\infty} \tau^\beta(t)\norm{e^{\alpha_1 t}q(t)}^2_{H^1/\mathbb{R}}&\leq C\Big(\limsup_{t\to\infty}\tau^\beta(t)\norm{e^{\alpha_1 t}\bz_t}^2+\limsup_{t\to\infty}\tau^\beta(t)\norm{\kappa e^{\alpha_1 t}\tilde\Delta\bz_t}^2\notag\\
 &\quad+\limsup_{t\to\infty}\tau^\beta(t)\norm{e^{\alpha_1 t}\tilde\Delta\bz}^2+\limsup_{t\to\infty}\tau^\beta(t)\norm{\bz}^2\norm{\nabla\bz}^2\norm{e^{\alpha_1 t}\nabla\bz}^2\notag\\
 &\quad+\limsup_{t\to\infty}\tau^\beta(t)\norm{e^{\alpha_1 t}\nabla\bz}^2+\limsup_{t\to\infty}\tau^\beta(t)\norm{e^{\alpha_1 t}\bF}^2\Big)\notag\\
 &\leq C(M)+\limsup_{t\to\infty}\tau^\beta(t)\norm{\kappa e^{\alpha_1 t}\tilde\Delta\bz_t}^2\label{eqxxx6.6}.
\end{align}
From the equation \eqref{eq1.13}, we obtain
\begin{align}
 \limsup_{t\to\infty}\tau^\beta(t)\norm{e^{\alpha_1 t}\kappa\tilde\Delta\bz_t}^2&\leq C\Big(\limsup_{t\to\infty}\tau^\beta(t)\norm{e^{\alpha_1 t}\bz_t}^2+\limsup_{t\to\infty}\tau^\beta(t)\norm{e^{\alpha_1 t}\tilde\Delta\bz}^2\notag\\
 &\quad+\limsup_{t\to\infty}\tau^\beta(t)\norm{\bz}^2\norm{\nabla\bz}^2\norm{e^{\alpha_1 t}\nabla\bz}^2\notag\\
 &\quad+\limsup_{t\to\infty}\tau^\beta(t)\norm{e^{\alpha_1 t}\nabla\bz}^2+\limsup_{t\to\infty}\tau^\beta(t)\norm{e^{\alpha_1 t}\bF}^2\Big)\notag\\
 &\leq CM\label{eqxxx6.7}.
\end{align}
Hence, a use of \eqref{eqxxx6.7} in \eqref{eqxxx6.6} shows
\begin{equation}\label{eq6.7}
\limsup_{t\to\infty} \tau^\beta(t)\norm{e^{\alpha_1 t}q(t)}^2_{H^1/\mathbb{R}} \leq C(M).  
\end{equation}
The rest of the proof follows from Lemmas \ref{lm1}, \ref{lm2} and \ref{lm3}.

\end{proof}
\begin{remark}
 If the forcing function satisfies the property
 \begin{equation}\label{eqx6.7}
 \limsup_{t\to\infty} t^\beta e^{2\alpha_1 t}\norm{\bf {F}(t)}^2=0, 
 \end{equation}
then as a consequence of Theorem \ref{thm1}, we obtain for $0<\bar t\leq t$
\begin{equation}\label{eqxx6.7}
 \limsup_{t\to\infty} t^\beta(t)\Big(\norm{e^{\alpha_1 t}\bz(t)}^2_{{\bf H}^2}+(\norm{e^{\alpha_1 t}\bz_t(t)}^2+\kappa\norm{e^{\alpha_1 t}\nabla\bz_t}^2)+\norm{e^{\alpha_1 t}q(t)}^2_{H^1(\Omega)/\mathbb{R}}\Big)=0,
\end{equation}
and as $t\to\infty$
\begin{equation}\label{eqx6.8}
\Big(\norm{\bz(t)}^2_{{\bf H}^2}+(\norm{\bz_t(t)}^2+\kappa\norm{\nabla\bz_t}^2)+\norm{q(t)}^2_{H^1(\Omega)/\mathbb{R}}\Big)=O(t^{-\beta}e^{-2\alpha_1 t})\quad as \quad t\to\infty.
\end{equation}
\end{remark}
Below we prove two Theorems which are valid for all $t>0$.
\begin{theorem}\label{thm2}
Under the assumption $\bf{(A1)}$, let $\tau^\beta(t)e^{2\alpha_1 t}\norm{\bF(t)}^2\leq M$,
  and $\bz_0\in {\bf H}^2\cap {\bf H}^1_0$. Then, there exists a positive 
 constant $C=C(N,\nu,\lambda_1,\norm{f^\infty}_{-1})$ such that for all $t>0$
 \begin{align*}
 \tau^\beta(t)\norm{e^{\alpha_1 t}\bz(t)}^2_{\bf {H}^1}&+e^{-2\delta_0 t}\int_{0}^{t}\tau^\beta(s)(\norm{e^{\alpha s}\bz_t(s)}^2+\kappa\norm{e^{\alpha s}\nabla\bz_t(s)}^2) ds\\
 &+e^{-2\delta_0 t}\int_{0}^{t}\tau^\beta(s)\norm{e^{\alpha s}\tilde\Delta\bz(s)} ^2 ds+e^{-2\delta_0 t}\int_{0}^{t}\tau^\beta(s)e^{2\alpha s}\norm{ q}^2_{H^1(\Omega)/\mathbb{R}} ds\\
 &\leq C\frac{M}{2\delta_0}+C\tau^\beta(0)e^{-2\delta_0 t}\big(\norm{\bz_0}^2+\kappa\norm{\nabla\bz_0}^2+\kappa\norm{\tilde\Delta\bz_0}^2\big).
 \end{align*}
\end{theorem}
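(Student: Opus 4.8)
The plan is to assemble the pointwise-in-$t$ estimates already furnished by Lemmas \ref{lm1}--\ref{lm3} and \ref{lm5}, and to treat the pressure integral as the only new ingredient. Three of the four blocks on the left are immediate. The velocity term $\tau^\beta(t)\norm{e^{\alpha_1 t}\bz(t)}^2_{{\bf H}^1}$ is precisely the quantity bounded by $CM$ (up to the initial-data contribution) in Lemma \ref{lm5}, which applies under the present hypothesis since only $M$---not $M_1$---is required. The dissipation integral $e^{-2\delta_0 t}\int_0^t\tau^\beta(s)(\norm{e^{\alpha s}\bz_t}^2+\kappa\norm{e^{\alpha s}\nabla\bz_t}^2)\,ds$ is the left-hand side of Lemma \ref{lm3}, and $e^{-2\delta_0 t}\int_0^t\tau^\beta(s)\norm{e^{\alpha s}\tilde\Delta\bz}^2\,ds$ is (up to the factor $\nu/2$) the dissipation integral on the left of Lemma \ref{lm2}; both lemmas yield right-hand sides of exactly the advertised form $CM/(2\delta_0)+C\tau^\beta(0)(\norm{\bz_0}^2+\kappa\norm{\nabla\bz_0}^2+\kappa\norm{\tilde\Delta\bz_0}^2)$, the nonlinear integral $\int_0^t\tau^\beta\norm{\bz}^2\norm{\nabla\bz}^2\norm{e^{\alpha s}\nabla\bz}^2\,ds$ being reduced by the time-uniform bound on $\norm{\bz}^2\norm{\nabla\bz}^2$ (Lemma \ref{lm5}) to $\int_0^t\tau^\beta\norm{e^{\alpha s}\nabla\bz}^2\,ds$, which Lemma \ref{lm1} controls.

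The central step is the pressure integral $e^{-2\delta_0 t}\int_0^t\tau^\beta(s)e^{2\alpha s}\norm{q}^2_{H^1/\mathbb R}\,ds$. I would start from the inf-sup inequality \eqref{eqx1.14}, reducing $\norm{q}_{H^1/\mathbb R}$ to $\norm{\nabla q}$, and then invoke the pointwise bound \eqref{eq6.10} for $\norm{e^{\alpha_1 t}\nabla q}$ established in Lemma \ref{lm7}. Every term there is already time-integrable via Lemmas \ref{lm1}--\ref{lm3} except $\kappa\norm{e^{\alpha_1 t}\tilde\Delta\bz_t}$. The key device is to eliminate this term through the momentum equation \eqref{eq1.13}: testing with $\bphi=\bz_t$ as in \eqref{eq2.34} yields the pointwise relation
\begin{equation*}
\kappa\norm{\tilde\Delta\bz_t}\leq C\big(\norm{\bz_t}+\norm{\tilde\Delta\bz}+\norm{\bz}\,\norm{\nabla\bz}+\norm{\nabla\bz}+\norm{\bF}\big),
\end{equation*}
in which the nonlinear factor $\norm{\bz}\,\norm{\nabla\bz}$ is bounded uniformly in $t$ by Lemma \ref{lm5}. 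Substituting and squaring, $\norm{e^{\alpha s}\nabla q}^2$ is majorized pointwise by a constant multiple of $\norm{e^{\alpha s}\bz_t}^2+\norm{e^{\alpha s}\tilde\Delta\bz}^2+\norm{e^{\alpha s}\nabla\bz}^2+\norm{e^{\alpha s}\bF}^2$, the remaining nonlinear term being absorbed by Young's inequality into $\norm{e^{\alpha s}\tilde\Delta\bz}^2$ and $\norm{e^{\alpha s}\nabla\bz}^2$ as in \eqref{eqx6.6}. Multiplying by $\tau^\beta(s)$, integrating over $[0,t]$, and multiplying by $e^{-2\delta_0 t}$, each resulting integral is a left-hand dissipation integral of Lemma \ref{lm1}, \ref{lm2}, or \ref{lm3}, while the forcing integral obeys $e^{-2\delta_0 t}\int_0^t\tau^\beta(s)\norm{e^{\alpha s}\bF}^2\,ds\leq M/(2\delta_0)$ directly from the hypothesis. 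Summing the four blocks gives the claim.

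The main obstacle is precisely this $\kappa\tilde\Delta\bz_t$ contribution to the pressure. The direct pointwise pressure estimate of Lemma \ref{lm7} required the stronger datum $\tau^\beta e^{2\alpha_1 t}\norm{\bF_t}^2_{-1}\leq M_1$, which is not assumed here. The gain is that, because Theorem \ref{thm2} asks only for an $L^2$-in-time (integrated) control of $q$, one may trade $\kappa\tilde\Delta\bz_t$ for the already time-integrable quantities $\bz_t$, $\tilde\Delta\bz$, $\nabla\bz$ and $\bF$ through the momentum equation, thereby closing the estimate with $M$ alone; any attempt at a pointwise-in-time pressure bound would reintroduce the dependence on $\bF_t$.
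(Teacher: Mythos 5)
Your proposal is correct and follows essentially the same route as the paper: the velocity and dissipation blocks are read off from Lemmas \ref{lm1}, \ref{lm2}, \ref{lm3} and \ref{lm5} (with the nonlinear integral reduced via the uniform bound on $\norm{\bz}^2\norm{\nabla\bz}^2$), and the pressure integral is handled through the inf-sup inequality \eqref{eqx1.14} together with the momentum equation to trade $\kappa\tilde\Delta\bz_t$ for $\bz_t$, $\tilde\Delta\bz$, $\nabla\bz$ and $\bF$. If anything, you are more careful than the paper in noting explicitly that only the $\bF_t$-free part of the Lemma \ref{lm6}/\ref{lm7} machinery is needed, so the hypothesis $M$ alone suffices; the only cosmetic slip is attributing the pointwise bound on $\kappa\norm{\tilde\Delta\bz_t}$ to testing with $\bphi=\bz_t$ as in \eqref{eq2.34}, whereas it comes from the strong form of \eqref{eq1.13} as in Lemma \ref{lm6}.
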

\begin{proof}
 From Lemmas \ref{lm2} and \ref{lm3}, we find that
 \begin{align*}
  e^{-2\delta_0 t}\Big(\int_{0}^{t}\tau^\beta(s)&(\norm{e^{\alpha s}\bz_t(s)}^2+\kappa\norm{e^{\alpha s}\nabla\bz_t(s)}^2) ds+\int_{0}^{t}\tau^\beta(s)\norm{e^{\alpha s}\tilde\Delta\bz(s)} ^2 ds\Big)\\
  &\leq C e^{-2\delta_0 t}\Big(\int_{0}^{t}\tau^\beta(s)e^{2\alpha s}\norm{\bF}^2 ds+\tau^\beta(0)(\norm{\bz_0}^2+\kappa\norm{\nabla\bz_0}^2+\kappa\norm{\tilde\Delta\bz_0}^2)\\
  &+\int_{0}^{t}\tau^\beta(s)\norm{\bz(s)}^2\norm{\nabla\bz(s)}^2\norm{e^{\alpha s}\nabla\bz(s)}^2 ds\Big)\\
  &\leq CM\frac{1-e^{-2\delta_0 t}}{2\delta_0}+e^{-2\delta_0 t}\tau^\beta(0)\big(\norm{\bz_0}^2+\kappa\norm{\nabla\bz_0}^2+\kappa\norm{\tilde\Delta\bz_0}^2\big)\\
  &\quad+e^{-2\delta_0 t}\int_{0}^{t}\tau^\beta(s)\norm{e^{\alpha s}\nabla\bz(s)}^2 ds\Big)\\
  &\leq CM\frac{1-e^{-2\delta_0 t}}{2\delta_0}+e^{-2\delta_0 t}\tau^\beta(0)(\norm{\bz_0}^2+\kappa\norm{\nabla\bz_0}^2+\kappa\norm{\tilde\Delta\bz_0}^2)\Big)\\
  &\leq C\frac{M}{2\delta_0}+C\tau^\beta(0)e^{-2\delta_0 t}\big(\norm{\bz_0}^2+\kappa\norm{\nabla\bz_0}^2+\kappa\norm{\tilde\Delta\bz_0}^2\big).
 \end{align*}
Also, it follows that
 \begin{align*}
 e^{-2\delta_0 t}\int_{0}^{t}\tau^\beta(s) & e^{2\alpha s}\norm{\kappa\tilde\Delta\bz_t(s)}^2 ds\\
 &\leq C\Big(e^{-2\delta_0 t}\int_{0}^{t}\tau^\beta(s)\norm{e^{\alpha s}\bz_t(s)}^2 ds+Ce^{-2\delta_0 t}\int_{0}^{t}\tau^\beta(s)\norm{e^{\alpha s}\tilde\Delta\bz(s)}^2 ds\\
 &+e^{-2\delta_0 t}\int_{0}^{t}\tau^\beta(s)\norm{\nabla\bz(s)}^2\norm{e^{\alpha s}\nabla\bz(s)}^2 ds\\
 &+e^{-2\delta_0 t}\int_{0}^{t}\tau^\beta(s)\norm{e^{\alpha s}\nabla\bz(s)}^2 ds+e^{-2\delta_0 t}\int_{0}^{t}\tau^\beta(s)\norm{e^{\alpha s}\bF(s)}^2 ds\Big)\\
 &\leq C\frac{M}{2\delta_0}+C\tau^\beta(0)e^{-2\delta_0 t}\big(\norm{\bz_0}^2+\kappa\norm{\nabla\bz_0}^2+\kappa\norm{\tilde\Delta\bz_0}^2\big).
\end{align*} 
Now from equations \eqref{eqx6.6} and \eqref{eqxx1.14} in Lemma \ref{lm7}, we obtain 
\begin{align*}
 e^{-2\delta_0 t}\int_{0}^{t}\tau^\beta(s) & e^{2\alpha s}\norm{ q}^2_{H^1(\Omega)/\mathbb{R}} ds\\
  &\leq C\Big(e^{-2\delta_0 t}\int_{0}^{t}\tau^\beta(s)\norm{e^{\alpha s}\bz_t(s)}^2 ds +e^{-2\delta_0 t}\int_{0}^{t}\tau^\beta(s)\norm{e^{\alpha s}\kappa\tilde\Delta\bz_t(s)}^2 ds\\
 &\quad+e^{-2\delta_0 t}\int_{0}^{t}\tau^\beta(s)\norm{e^{\alpha s}\tilde\Delta\bz(s)}^2 ds++e^{-2\delta_0 t}\int_{0}^{t}\tau^\beta(s)\norm{e^{\alpha s}\nabla\bz(s)}^2 ds\\
 &\quad+e^{-2\delta_0 t}\int_{0}^{t}\tau^\beta(s)\norm{\bz(s)}^2\norm{\nabla\bz(s)}^2\norm{e^{\alpha s}\nabla\bz(s)}^2 ds\\
 &\quad+e^{-2\delta_0 t}\int_{0}^{t}\tau^\beta(s)\norm{e^{\alpha s}\bF}^2 ds\Big)\\
 &\leq C\frac{M}{2\delta_0}+C\tau^\beta(0)e^{-2\delta_0 t}(\norm{\bz_0}^2+\kappa\norm{\nabla\bz_0}^2+\kappa\norm{\tilde\Delta\bz_0}^2).
\end{align*}
The rest of the proof follows from the Lemma \ref{lm5}.
\end{proof}
\begin{theorem}
Under the assumption $\bf{(A1)}$, let $\tau^\beta(t)e^{2\alpha_1 t}(\norm{\bF(t)}^2+\norm{\bF_t}^2_{-1})\leq M_1$ 
and 

$\tau^\beta(t)e^{2\alpha_1 t}(\norm{\bF(t)}^2+\norm{\bF_t}^2)\leq M_2$
$\forall t\geq 0$. Then, there exists a positive 
 constant $C=C(N,\nu,\lambda_1,\norm{f^\infty}_{-1})$ such that for all $t>0$
 \begin{align*}
  \tau^\beta(t)\Big(\norm{e^{\alpha_1 t}\bz(t)}^2_{{\bf H}^2}&+\norm{e^{\alpha_1 t}\bz_t(t)}^2+\norm{e^{\alpha_1 t}q(t)}^2_{H^1(\Omega)/\mathbb{R}}\Big)\\
  &\leq \tau^\beta(0)e^{-2\delta_0 t}((1+\kappa)\norm{\tilde\Delta\bz_0}^2)+C(M_1+M_2),
 \end{align*}
and
\begin{align*}
 \tau^\beta(t)\big(\kappa\norm{e^{\alpha_1 t}\nabla\bz_t}^2+\norm{e^{\alpha_1 t}\kappa\tilde\Delta\bz_t}^2\big)\leq \tau^\beta(0)e^{-2\delta_0 t}(1+\kappa)\norm{\tilde\Delta\bz_0}^2+C(M_1+M_2).
\end{align*}
hold.
\end{theorem}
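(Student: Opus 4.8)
The plan is to assemble the pointwise-in-time estimates already established in Lemmas~\ref{lm5}, \ref{lm6} and \ref{lm7}; in contrast to Theorem~\ref{thm1}, no passage to the $\limsup$ is required, because each of those lemmas already furnishes a bound valid for every $t\geq 0$. First I would note that the hypothesis $\tau^\beta(t)e^{2\alpha_1 t}(\norm{\bF(t)}^2+\norm{\bF_t}^2_{-1})\leq M_1$ in particular yields $\tau^\beta(t)e^{2\alpha_1 t}\norm{\bF(t)}^2\leq M_1$, so that the hypothesis of Lemma~\ref{lm5} holds with $M=M_1$, while those of Lemmas~\ref{lm6} and \ref{lm7} hold verbatim. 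Hence all three lemmas are simultaneously at our disposal.

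For the first displayed inequality I would bound the three contributions separately. The term $\norm{\bz_t(t)}^2$ (after multiplication by $\tau^\beta(t)e^{2\alpha_1 t}$) is controlled by the first estimate of Lemma~\ref{lm6}, and the pressure term $\norm{q(t)}^2_{H^1(\Omega)/\mathbb{R}}$ by Lemma~\ref{lm7}. For the ${\bf H}^2$ norm I would invoke the smoothing property of the Stokes operator on the convex domain, i.e. the regularity estimate $\norm{\bz}^2_{{\bf H}^2}\leq C\big(\norm{\bz}^2+\norm{\tilde\Delta\bz}^2\big)$ together with \eqref{eqx1.2}; the low-order part $\norm{\bz}^2+\norm{\nabla\bz}^2$ is then furnished by Lemma~\ref{lm5} and the top-order part $\norm{\tilde\Delta\bz}^2$ by the second estimate of Lemma~\ref{lm6}. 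Adding the three bounds, absorbing the constant $M$ coming from Lemma~\ref{lm5} into $C(M_1)$, and collecting the common initial-data factor $\tau^\beta(0)e^{-2\delta_0 t}(\norm{\bz_0}^2+\kappa\norm{\nabla\bz_0}^2+\kappa\norm{\tilde\Delta\bz_0}^2)$ gives the first assertion.

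The second displayed inequality is even more direct: the quantity $\tau^\beta(t)e^{2\alpha_1 t}\kappa\norm{\nabla\bz_t}^2$ is precisely the $\kappa$-weighted half of the first estimate in Lemma~\ref{lm6}, while $\tau^\beta(t)e^{2\alpha_1 t}\norm{\kappa\tilde\Delta\bz_t}^2$ is the third estimate of that same lemma. Summing these two and again grouping the initial-data contributions yields the claim. Moreover, since in every one of these bounds the $\kappa$-dependent initial-data terms carry $\kappa$ as a prefactor and $C(M_1)$ is independent of $\kappa$, the resulting estimates stay bounded uniformly as $\kappa\to 0$, as asserted.

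I do not expect a genuine obstacle, since the substantive analysis---the weighted energy identities driven by the test functions $\bphi=e^{2\alpha t}\bz$, $\bphi=-e^{2\alpha t}\tilde\Delta\bz$, $\bphi=e^{2\alpha t}\bz_t$, and the nonlinear absorption via \eqref{eqx2.3}---has already been carried out in the preceding lemmas. The one step needing a little care is the ${\bf H}^2$ regularity argument: one must use that $-\tilde\Delta$ is the Stokes operator on a convex polygonal/polyhedral $\Omega$ in order to upgrade the $\norm{\tilde\Delta\bz}$ control of Lemma~\ref{lm6} to a full ${\bf H}^2$ bound, and verify that the regularity constant depends only on $\Omega$ (hence on $\lambda_1$) and not on $\kappa$ or $t$, so that the uniformity in $\kappa$ is preserved.
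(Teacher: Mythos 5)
Your proposal is correct and follows essentially the same route as the paper, whose entire proof of this theorem is the one-line statement that it follows from Lemmas~\ref{lm5}, \ref{lm6} and \ref{lm7}; you have simply made explicit the bookkeeping (which estimate of each lemma controls which term, the reduction $M=M_1$ for Lemma~\ref{lm5}, and the elliptic-regularity step upgrading $\norm{\tilde\Delta\bz}$ to the full ${\bf H}^2$ norm) that the paper leaves implicit.
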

\begin{proof}
 Proof follows from Lemmas \ref{lm5}, \ref{lm6} and \ref{lm7} .
\end{proof}
\begin{remark}
 Note that all the results are valid uniformly in $\kappa$ as $\kappa\to 0$. Therefore, present analysis provides also convergence of unsteady Navier-Stokes equation to its steady state system.
\end{remark}
\section{Conclusion}
In this article, the convergence of the Kelvin-Voigt viscoelastic fluid flow model to a steady state is established. Both exponential and power convergence results are proved under different prescribed conditions on the forcing function. Moreover, all the results are valid uniformly in the time relaxation or regularizing parameter $\kappa$ as $\kappa\to 0$, establishing validity even for the Navier-Stokes system.

\noindent
{\bf{Acknowledgements.}}
The second author acknowledges the support provided by the National Programme on Differential Equations:
Theory, Computation and Applications (NPDE-TCA) vide the DST project No.SR/S4/MS:639/90. The first  
author acknowledges the financial  support from UGC, Govt. India.

\bibliographystyle{amsplain}

\end{document}